\begin{document}

\newtheorem{thm}{Theorem}[section]
\newtheorem{cor}[thm]{Corollary}
\newtheorem{qu}[thm]{Question}
\newtheorem{pr}[thm]{Problem}
\newtheorem{lem}[thm]{Lemma}
\newtheorem{defi}[thm]{Definition}
\newtheorem{prop}[thm]{Proposition}
\newtheorem{opg}[thm]{Opgave}
\theoremstyle{definition}
\newtheorem{rem}[thm]{Remark}
\newtheorem{ex}[thm]{Example}
\newtheorem{cond}[thm]{Condition}
\newtheorem*{pro}{Problem}

\newenvironment{Proof}{\begin{trivlist}

\item[\hskip\labelsep{\it Proof.}]}{$\hfill\Box$\end{trivlist}}

\newcommand{\NN}{\mathbb{N}}
\newcommand{\ZZ}{\mathbb{Z}}
\newcommand{\RR}{\mathbb{R}}
\newcommand{\EE}{\mathbb{E}}
\newcommand{\PP}{\mathbb{P}}
\newcommand{\Ss}{\mathbb{S}}
\newcommand{\QQ}{\mathbb{Q}}

\newcommand{\BB}{\mathbb{B}}

\newcommand{\Pset}{\mathcal{P}}
\newcommand{\Gset}{\mathcal{G}}

\newcommand{\Mlattice} {L_{N,a,b}}
\newcommand{\MMlattice} {L_{N,1,b}}
\newcommand{\MlatticeQ} {L_{N_i,p_i,r_i}}
\newcommand{\MlatticeP}[3] {L_{#1,#2,#3}}
\newcommand{\Kset} {K_{\alpha,\beta}(N)}
\newcommand{\KsetQ} {K_{\alpha,\beta}(N_i)}

\newcommand{\LL}{\mathcal{L}}

\title[TSP of 2-dimensional Kronecker sets]{Bounds for the traveling salesman paths of two-dimensional modular lattices}

\author{Florian Pausinger}
\address{TU M\"{u}nchen, Zentrum Mathematik, M10 Lehrstuhl f\"{u}r Geometrie und Visualisierung }
\email{florian.pausinger@ma.tum.de}

\maketitle

\begin{abstract} 
We present tight upper and lower bounds for the traveling salesman path through the points of two-dimensional modular lattices. We use these results to bound the traveling salesman path of two-dimensional Kronecker point sets. Our results rely on earlier work on shortest vectors in lattices as well as on the strong convergence of Jacobi-Perron type algorithms.
\end{abstract}

\section{Introduction}
\label{sec1}

\subsection{Traveling Salesman Problem}
The traveling salesman problem asks for the length\\ $\LL(x_1, \ldots, x_N)$ of the shortest path through the points $\{ x_1, \ldots, x_N\} \subset \RR^d$. Setting $x_{\sigma(N+1)}:=x_{\sigma(1)}$, we write
$$ \LL(x_1, \ldots, x_N) = \min_{\sigma} \sum_{n=1}^{N} \| x_{\sigma(n)} - x_{\sigma(n+1)} \|, $$
where the minimum is over all permutations $\sigma$ of $\{ 1,2,\ldots, N \}$ and $\| v \|$ be the $2$-norm of a vector $v$.
A theorem of Beardwood, Halton and Hammersley \cite{BHH59}  gives precise asymptotic results for the case of uniformly distributed random variables in $[0,1]^d$. 
\begin{thm}[Beardwood, Halton, Hammersley, 1959] \label{thm:bhh} Let $X_1, X_2, \ldots, X_N$ be i.i.d. uniformly distributed random variables in $[0,1]^d$. Then there exists a constant $\beta(d)$ such that
\begin{equation*}
\underset{N \rightarrow \infty} \lim \frac{\LL(X_1,\ldots, X_N)}{N^{(d-1)/d}} = \beta(d)
\end{equation*}
with probability 1. 
\end{thm}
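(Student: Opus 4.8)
The plan is to exploit the near-additive structure of the functional $\LL$ under a geometric partition of the cube, and then to upgrade convergence of the mean to almost sure convergence by a concentration inequality. First I would record two elementary structural properties of $\LL$. It is homogeneous of degree one, so that $\LL(\alpha x_1,\ldots,\alpha x_N)=\alpha\,\LL(x_1,\ldots,x_N)$ for every $\alpha>0$, and it is \emph{uniformly bounded} on $[0,1]^d$ in the sense that $\LL(x_1,\ldots,x_N)\le C_d\,N^{(d-1)/d}$ for \emph{every} configuration of $N$ points. The latter follows by partitioning $[0,1]^d$ into $\lceil N^{1/d}\rceil^d$ congruent subcubes, traversing the occupied subcubes in a boustrophedon (snake) order, and inserting a short local tour inside each one: there are $O(N)$ subcubes, each of diameter $O(N^{-1/d})$, giving total length $O(N^{(d-1)/d})$.

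Next I would establish the two-sided subadditivity estimate for the mean. Fix an integer $m$, slice $[0,1]^d$ into $m^d$ subcubes of side $1/m$, and write $\phi(N):=\EE[\LL(X_1,\ldots,X_N)]$. Patching together optimal tours of the points inside each subcube into one global tour costs only $O(m^{d-1})$ extra edges, since the join cost is governed by the number of subcube faces; conversely, cutting any global tour at the subcube boundaries induces local tours whose lengths sum to at least the global length minus a boundary contribution of the same order. Using that the number of points in each subcube is multinomial with mean $N/m^d$, and rescaling each subcube to unit size by homogeneity, I obtain
\[
 m^{d-1}\,\phi\!\left(\tfrac{N}{m^d}\right)-C\,m^{d-1}\;\le\;\phi(N)\;\le\;m^{d-1}\,\phi\!\left(\tfrac{N}{m^d}\right)+C\,m^{d-1}.
\]
These approximate sub/superadditive relations force $\phi(N)/N^{(d-1)/d}$ to converge to a finite, strictly positive limit, which I define to be $\beta(d)$; the fluctuation of the multinomial counts around $N/m^d$ is absorbed either by Poissonizing the number of points or directly by the uniform boundedness bound of the first step.

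Finally I would pass from $L^1$-convergence to almost sure convergence by concentration. The functional has the bounded-difference property: moving a single point $X_i$ alters the optimal tour by at most the local interpoint spacing, which is $O(N^{-1/d})$ with high probability and $O(1)$ deterministically. Inserting these increments into an Azuma--Hoeffding / McDiarmid bounded-difference inequality gives $\PP\big(|\LL(X_1,\ldots,X_N)-\phi(N)|>\varepsilon N^{(d-1)/d}\big)$ exponentially small, so that a Borel--Cantelli argument along $N\in\NN$, combined with a monotonicity/interpolation estimate to fill the gaps between consecutive integers, yields the claimed almost sure limit. The main obstacle I anticipate is twofold: first, the matching \emph{superadditive} lower bound, where one must show uniformly over the random configuration that no global tour beats the sum of local tours by more than the $O(m^{d-1})$ boundary term, is where the Euclidean geometry genuinely enters and $\beta(d)$ is pinned down; second, the concentration step requires care with the fluctuation exponent in low dimensions, where the deterministic $O(1)$ increment is too crude and one must exploit the sharper $O(N^{-1/d})$ typical spacing to obtain tails summable in $N$.
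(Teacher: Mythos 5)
The paper does not prove this statement: Theorem \ref{thm:bhh} is quoted as a classical result with a citation to \cite{BHH59} and is never used beyond motivation, so there is no proof of the author's to compare yours against. Your outline is the standard modern route (Steele's subadditive-functional framework plus concentration of measure) rather than the original 1959 argument, which instead Poissonizes, applies a subadditive/ergodic argument over growing cubes, and invokes a zero--one law. Both routes work; yours is the one found in modern treatments.

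That said, the two points you flag as ``anticipated obstacles'' are not minor technicalities but genuine gaps in the sketch as written. First, the superadditive half of your two-sided estimate is false as stated: restricting an optimal global tour to a subcube yields a collection of arcs, and closing them into a local tour costs an extra edge per boundary crossing; the optimal tour can cross the grid of subcube boundaries $\Theta(N)$ times with each crossing contributing arbitrarily little length, so the error is not $O(m^{d-1})$. The standard repair is to introduce the \emph{boundary functional} (the shortest tour in which travel along the boundary of the subcube is free), prove exact superadditivity for it, and then show its expectation differs from $\EE[\LL]$ by $o(N^{(d-1)/d})$ --- this auxiliary functional is the missing idea, not a detail. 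Second, in the case $d=2$ (the only case the surrounding paper cares about), the plain bounded-difference inequality with deterministic increments $O(1)$ gives a deviation bound at scale $\sqrt{N}$, which is the same order as the mean; the resulting tail probability at $t=\varepsilon\sqrt{N}$ is a constant, hence not summable, and Borel--Cantelli does not apply. The known fix (Rhee--Talagrand) builds the Doob martingale by conditioning on the \emph{remaining} points so that the $i$-th conditional increment is of order $(N-i)^{-1/d}$, giving a sum of squared increments of order $\log N$ when $d=2$; your remark about ``typical spacing'' points in this direction but does not supply the construction. With those two ingredients added, the argument closes.
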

This classical theorem is impressively contrasted by a recent result of Arlotto and Steele \cite{AS15} who construct a stationary ergodic process $X_1, X_2, \ldots $ such that each $X_t$ has the uniform distribution on the unit square and the length of the shortest path is shown \emph{not} to be asymptotic to a constant times the square root of the number of points.\\
Returning to Theorem \ref{thm:bhh}, it is interesting that the exact value of the constant remains unknown despite serious efforts; see \cite{Ste15} for an overview.
For $d=2$, the bounds $0.625\leq \beta(2) \leq 0.922$ obtained in \cite{BHH59} were recently slightly improved by Steinerberger \cite{Ste15}.
Thus, it is natural to ask in which cases, i.e. for which point sets, it is actually possible to explicitly determine the length of the traveling salesman path.
We observe that the length of the shortest path through $N=n^2$ points arranged on a regular grid $G_N \subset [0,1]^2$ is roughly $\sqrt{N}$. In this case the asymptotic constant is 1. To see this, note that the shortest distance between two neighboring points is $1/(n-1)$ and the $n^2$ points are arranged in $n$ parallel lines. Each line contains $n$ points and, thus, has length 1. Two neighboring lines can be connected with a line segment of length $1/(n-1)$, whereas the first and last line are connected by a line segment of length $\leq \sqrt{2}$. Hence, $\sqrt{N} \leq \LL(G_N) \leq \sqrt{N} + 1 + \sqrt{2}$.\\

\subsection{Preliminaries} In this note we study the traveling salesman path (TSP) through the points of two-dimensional modular lattices as well as two-dimensional Kronecker point sets. 
A two-dimensional \emph{lattice} $L(b_1, b_2)$ is the set of all integer linear combinations of two linearly independent vectors $b_1, b_2$, which are said to generate the lattice. 
We define the \emph{length of the shortest non-zero vector} in the lattice $L$ as
\begin{equation*}
\lambda(L):= \min_{v \in L\setminus \{ 0\} } \| v\|.
\end{equation*}

\begin{center}
\begin{figure}[h!]
\subfigure{
\includegraphics[width=0.25\textwidth]{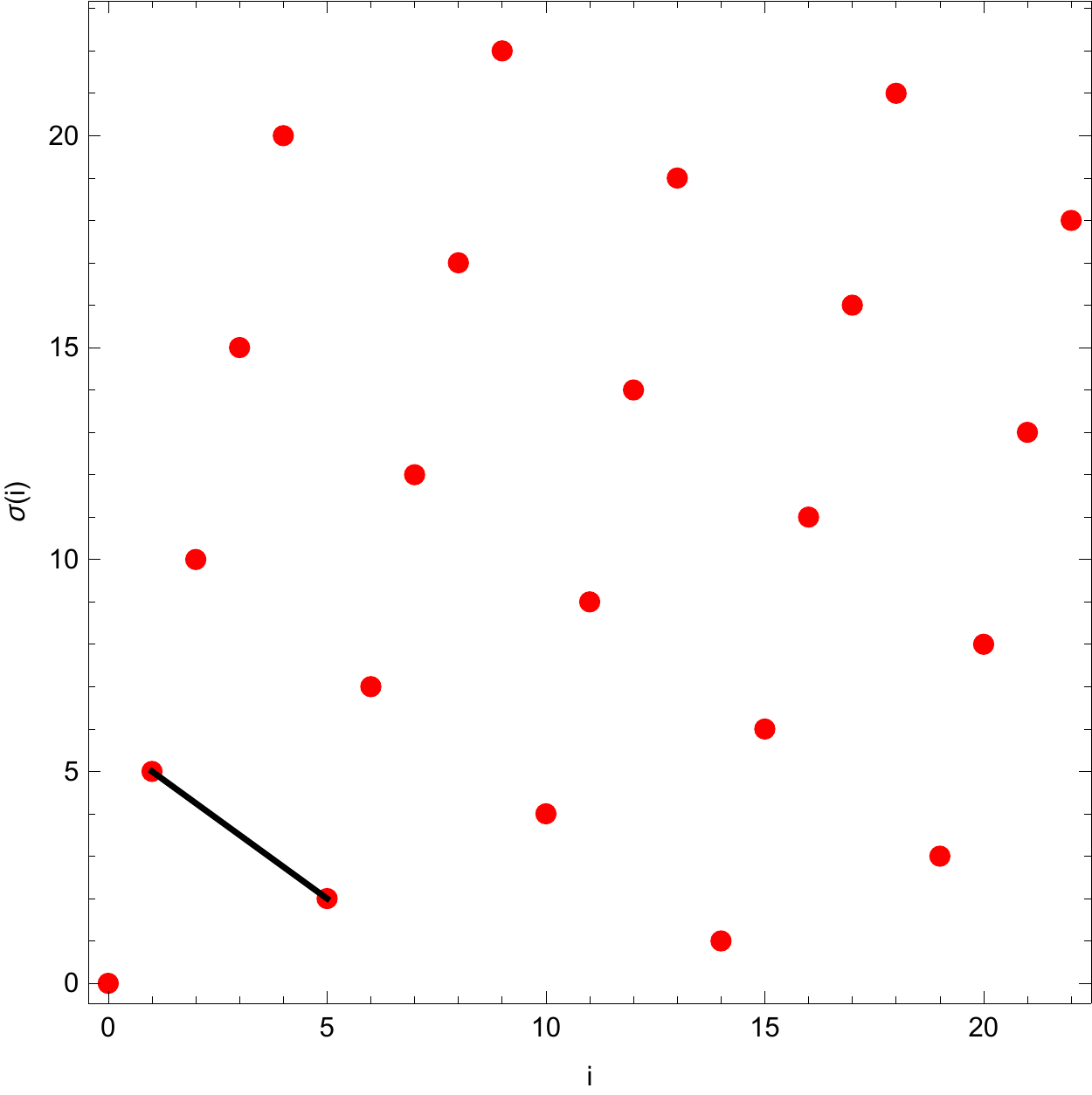}
} \ \ \ \ \ 
\subfigure{
\includegraphics[width=0.25\textwidth]{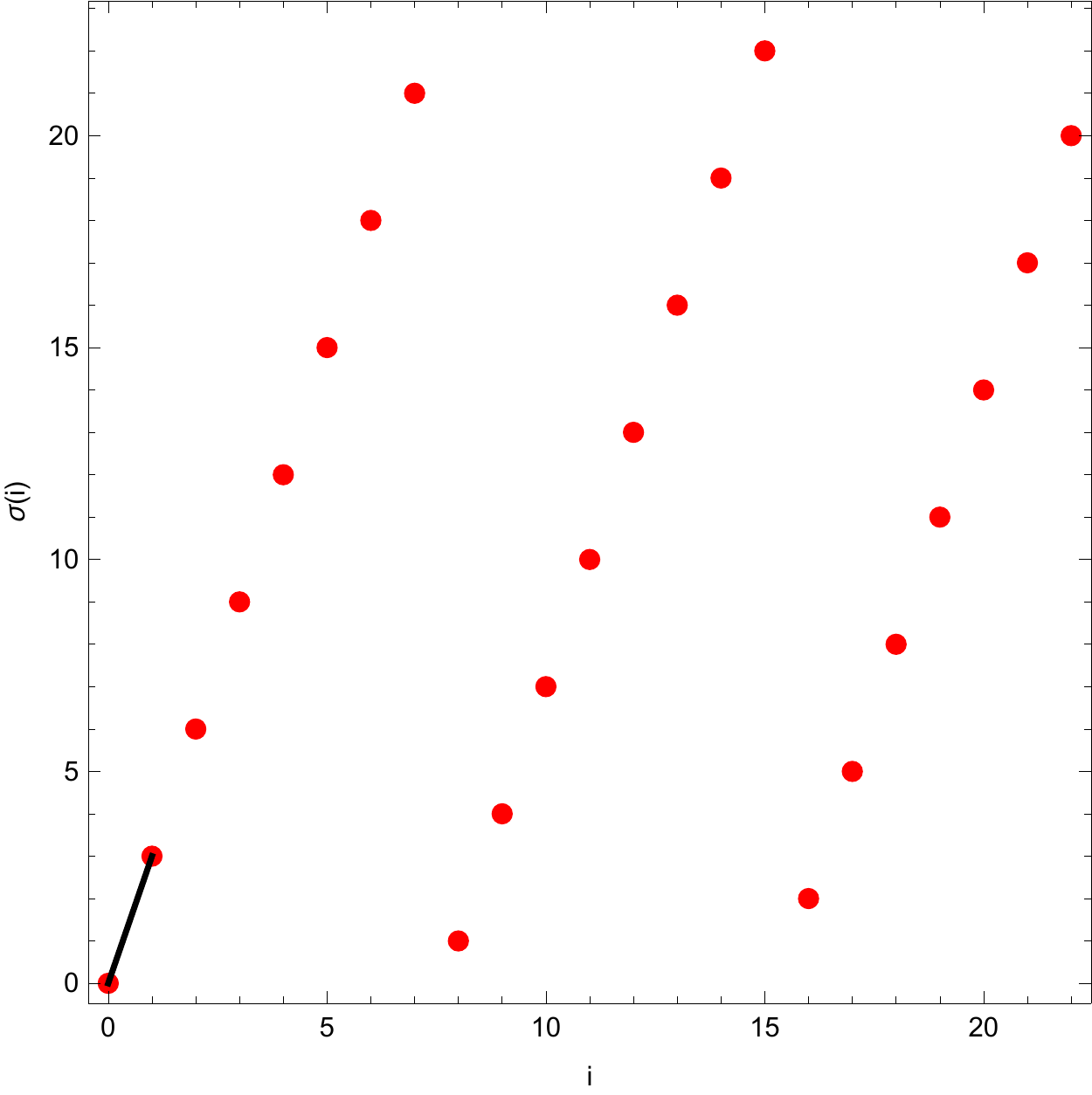}
}
\caption{Illustration of the \emph{shortest distance} in a modular lattice. The modular lattice $L_{23,1,5}$ (\emph{left}) does not contain the shortest vector of the corresponding lattice $L((1,5),(23,0),(0,23)) \subset \RR^2$. The lattice $L_{23,1,3}$ (\emph{right}) contains the shortest vector.} \label{fig:shortest}
\end{figure}
\end{center}
Let $0< a,b < N$ be integers. We define the \emph{modular lattice} $\Mlattice$ generated by the pair $(a,b)$ as 
$$\Mlattice:=\{ (na \! \! \! \! \pmod{N}, nb \! \! \! \! \pmod{N}) : 0 \leq n < N\}.$$ 
Thus, $\Mlattice$ is a subset of the square $[0,N-1]^2$.
Obviously, the set $\Mlattice$ is essentially the same as the lattice $L=L((a,b),(N,0),(0,N))$ generated by the vectors $(a,b),(N,0),(0,N)$:
If we restrict $L$ to $[0,N-1]^2$ we get $\Mlattice$, and if we add all integer multiples of $(N,0),(0,N)$ to $\Mlattice$ we obtain $L$.
Importantly, the shortest vector of $L$ is not necessarily contained in $\Mlattice$. However, if we are interested in the \emph{shortest distance} between points of $\Mlattice$, then this distance is exactly given by the length of the shortest vector of $L$; see Figure \ref{fig:shortest}.
In the following, whenever we work with modular lattices, we will abuse notation and write $\lambda(\Mlattice)$ for the shortest distance between points in $\Mlattice$, which is the length of the shortest vector of the corresponding lattice $L$.
For every modular lattice $\Mlattice$ with $\mathrm{gcd}(N,a,b)=1$ and arbitrary, distinct points $X=(na,nb), Y=(ma,mb)$, we have that $\| (X-Y) \pmod{N} \| \geq \sqrt{2}$. Hence, together with the upper bound from Lemma \ref{lem:help}, we have that 
$$\sqrt{2} \leq \lambda(\Mlattice) \leq 3/2 \sqrt{N}. $$
Moreover, let $0<\alpha, \beta<1$ be irrationals which are, together with 1, linearly independent over $\QQ$. We define the two-dimensional \emph{Kronecker point set}, $\Kset$, as the set of vectors 
$$\Kset:=\{ (n\alpha \! \! \! \! \pmod{1}, n \beta \! \! \! \! \pmod{1}) : 0 \leq n < N\},$$
which is a subset of $[0,1]^2$.
Both families of point sets are quasi random point sets. This means they are (for the right choice of parameters) very uniformly distributed over the unit square $[0,1]^2$ and, thus, often preferred over random points as integration nodes in numerical integration; see Figure \ref{fig:pointsets}.

\begin{center}
\begin{figure}[h!]
\subfigure{
\includegraphics[width=0.31\textwidth]{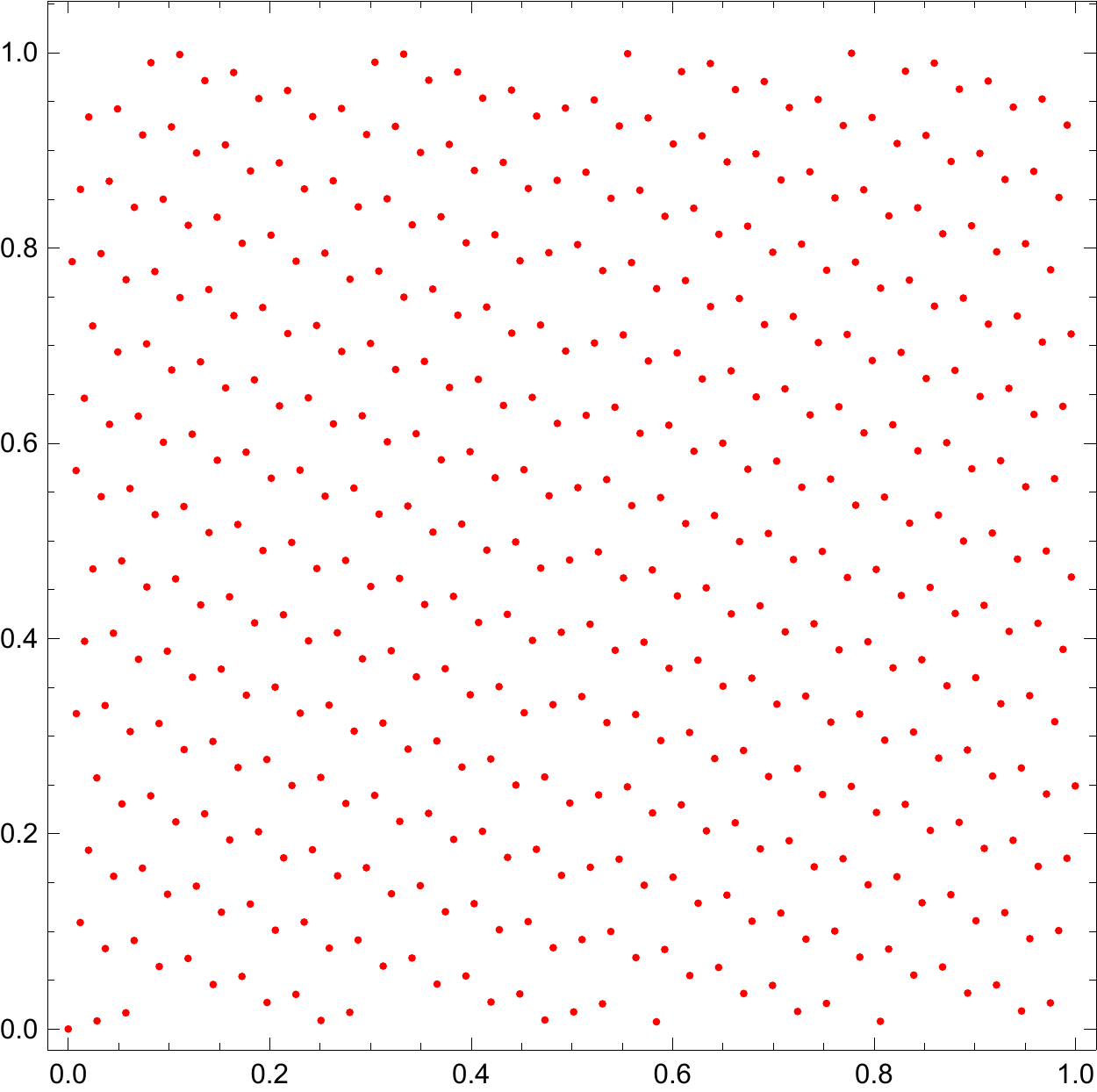}
}
\subfigure{
\includegraphics[width=0.31\textwidth]{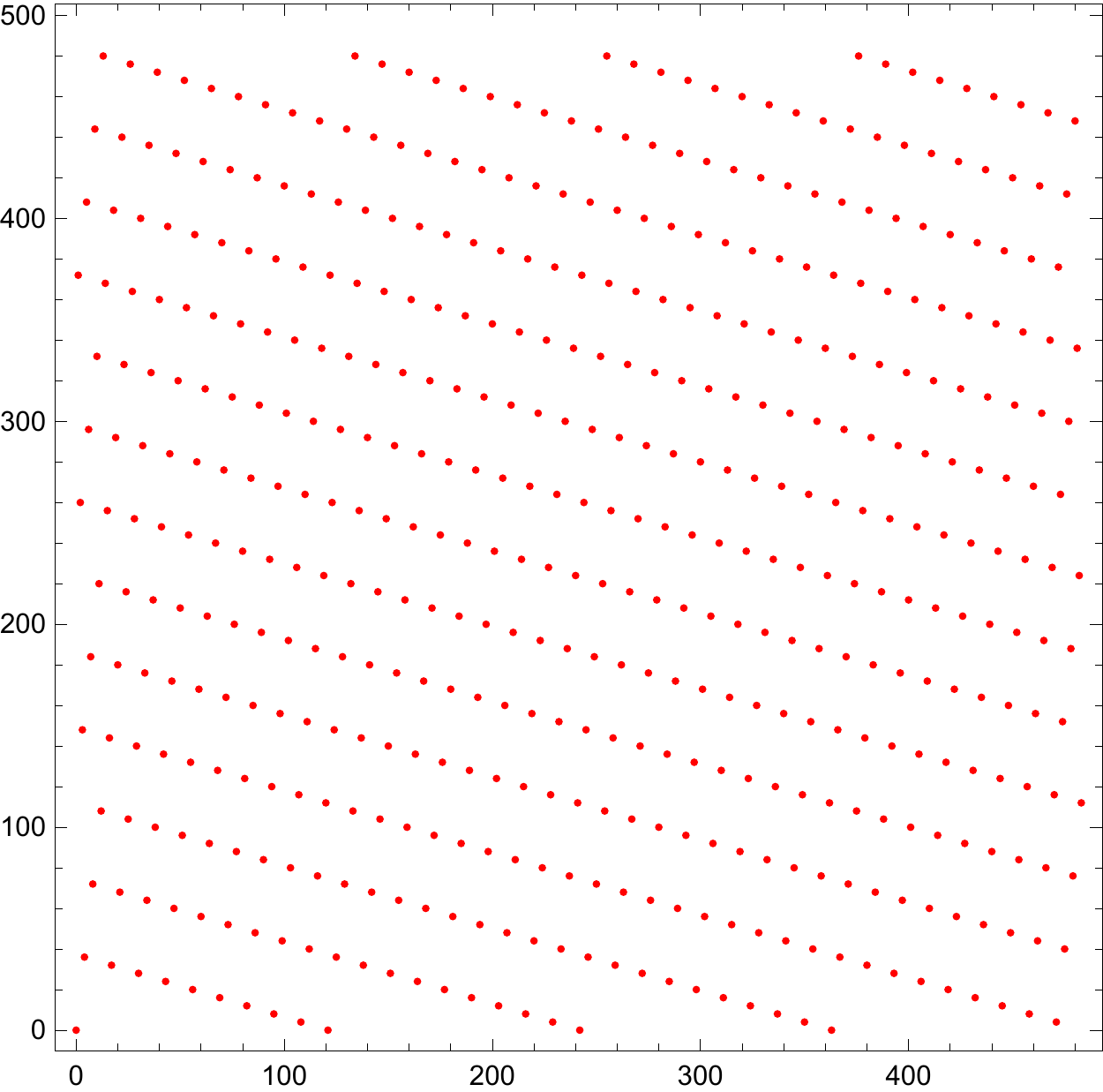}
}
\subfigure{
\includegraphics[width=0.31\textwidth]{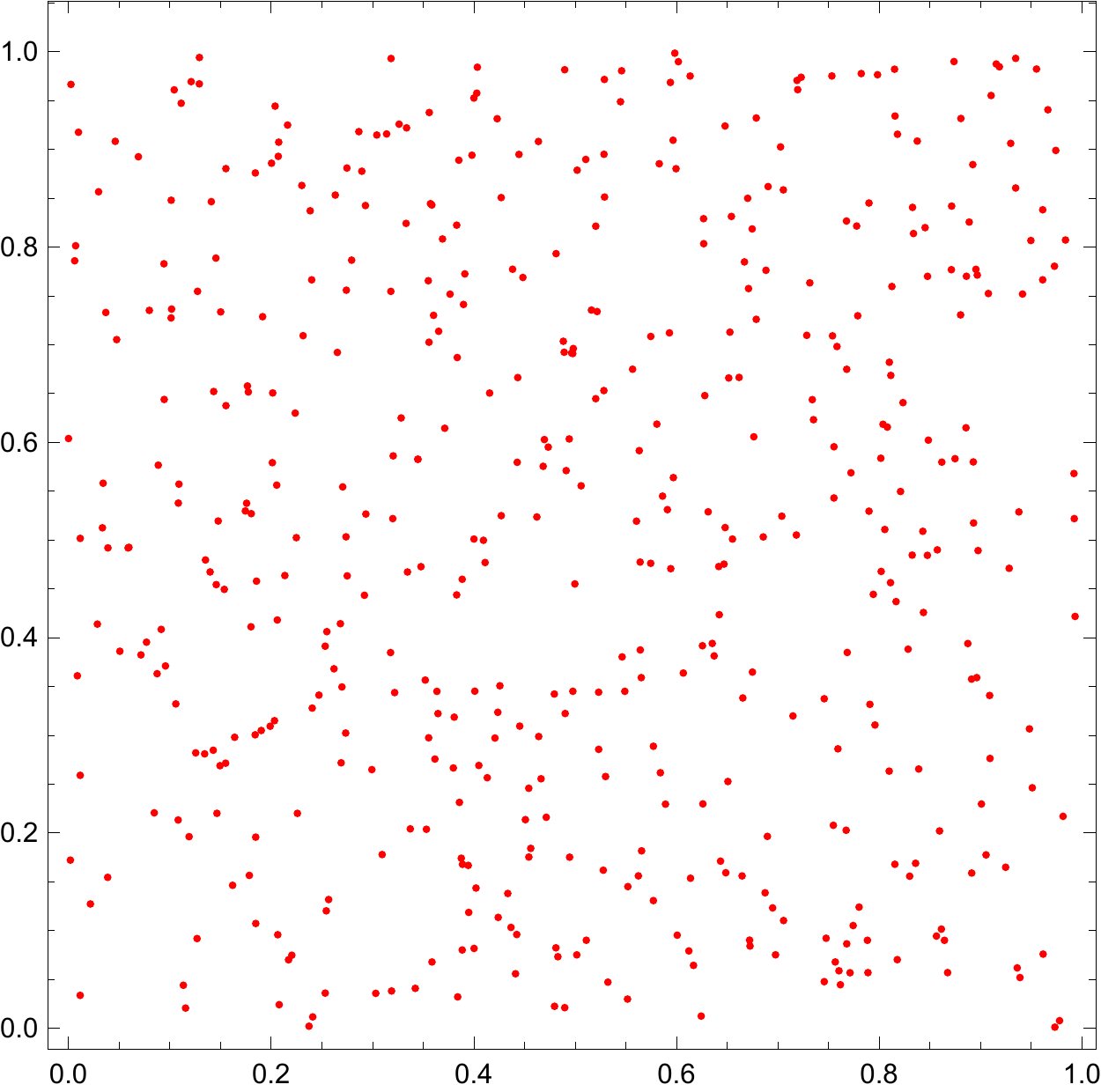}
}

\caption{The point set $K_{\alpha,\beta}(484)$ with  $\alpha=\sqrt[3]{91} \pmod{1}$ and $\beta=\sqrt[3]{91^2} \pmod{1}$ (\emph{left}), the modular lattice $L_{484,241,112}$ (\emph{middle}) and a set of $484$ random points (\emph{right}).} \label{fig:pointsets}
\end{figure}
\end{center}

\subsection{Results}
The aim of this note is to calculate bounds for the TSPs of modular lattices and Kronecker point sets.
In a first step, we show that the length of a shortest path in a modular lattice, $\Mlattice$, is intimately related to its shortest distance, $\lambda(\Mlattice)$:
\begin{thm} \label{thm1}
Let $\mathrm{gcd}(N,a,b)=1$.
If $\lambda(\Mlattice)$ denotes the shortest distance between points of the modular lattice $\Mlattice \subseteq [0,N-1]^2$ with $N$ points, then
\begin{equation*}
\frac{\lambda(\Mlattice)}{\sqrt{N}} \leq  \frac{\mathcal{L}(\Mlattice)}{N \sqrt{N}} \leq \frac{\lambda(\Mlattice) }{\sqrt{N}} + \frac{2 \sqrt{2}}{\sqrt{N}}.
\end{equation*}
\end{thm}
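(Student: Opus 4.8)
My plan is to establish the two displayed inequalities separately, treating the left one as a short counting argument and reserving the real work for the explicit tour behind the right one. Throughout write $\lambda := \lambda(\Mlattice)$ and recall that $\Mlattice$ is a full set of coset representatives of the lattice $L = L((a,b),(N,0),(0,N))$, whose covolume is $\det L = [\ZZ^2 : L] = N$ (since $\mathrm{gcd}(N,a,b)=1$ forces $L/N\ZZ^2$ to be cyclic of order $N$).

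For the \emph{lower bound} I would note that any closed path through the $N$ points of $\Mlattice$ has exactly $N$ edges, and each edge joins two distinct points $X,Y\in\Mlattice\subset L$. Then $X-Y\in L\setminus\{0\}$, so $\|X-Y\|\ge\lambda$ by the very definition of the shortest vector of $L$. Summing over the $N$ edges gives $\mathcal{L}(\Mlattice)\ge N\lambda$, which becomes the left inequality after dividing by $N\sqrt{N}$.

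For the \emph{upper bound} I would exhibit a single tour of length at most $N\lambda+2\sqrt{2}\,N$. First I would take a Gauss-reduced basis $v_1,v_2$ of $L$, so that $\|v_1\|=\lambda$ and $\|v_1\|\,\|v_2\|\le\frac{2}{\sqrt3}\det L=\frac{2}{\sqrt3}N$. I would then group the $N$ points into \emph{rows}: maximal runs of points of $\Mlattice$ that lie on a common line inside the box $[0,N-1]^2$ and are successive integer multiples of $v_1$ apart, so that neighbouring points in a row are at distance exactly $\lambda$. Traversing a row of length $s_i$ from one end to the other costs $(s_i-1)\lambda$, so the total length spent \emph{inside} rows is $\sum_i(s_i-1)\lambda=(N-R)\lambda\le N\lambda$, where $R$ is the number of rows. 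It then remains to connect the $R$ rows into one closed tour and to bound the $R$ connecting jumps by $2\sqrt{2}\,N$. The two geometric ingredients are that parallel lattice lines in direction $v_1$ are spaced $\det L/\|v_1\|=N/\lambda$ apart, so only $O(\lambda)$ of them meet the box, and that after ordering the rows by their coordinate perpendicular to $v_1$ one can connect them in a serpentine, so that the perpendicular components of the jumps telescope to at most the width $\sqrt2\,N$ of the box, while the along-row components and the single edge closing the cycle contribute at most another $\sqrt2\,N$; dividing $N\lambda+2\sqrt{2}\,N$ by $N\sqrt{N}$ yields the right inequality.

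I expect the main obstacle to be precisely this last bookkeeping. The rows defined through $v_1$ do not sit inside $[0,N-1]^2$ as clean segments but wrap around its sides, so a single line of lattice points breaks into several box-segments, and a naive single-vector traversal would pay $\approx N$ for each of the $O(\lambda)$ wraps, giving a hopeless $O(N^{3/2})$ term. The delicate step is therefore to show that the number of wrap-induced pieces is still only $O(\lambda)$ (controlled by the coordinates of $v_1$) and that they can be stitched together with jumps whose total length does not exceed $2\sqrt2\,N$. Getting this constant exactly right, rather than a mere $O(N)$, is where the care lies and is what separates the argument from the elementary grid estimate sketched in the introduction.
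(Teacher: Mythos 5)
Your proposal is correct and follows essentially the same route as the paper: partition $\Mlattice$ into the rows parallel to the shortest vector, spend $(N-R)\lambda \leq N\lambda$ inside the rows, pay at most $2\sqrt{2}\,N$ for the connecting jumps plus the closing edge, and use the identical edge-counting argument for the lower bound. The ``wrapping'' obstacle you anticipate does not actually arise: since $\Mlattice = L\cap[0,N-1]^2$ and the box is convex, each lattice line in direction $v_1$ meets it in a single segment, so the paper simply bounds each of the $k\leq 2\lambda$ inter-row jumps by $N\sqrt{2}/k+\lambda$ and sums, which is your telescoping estimate in a slightly different packaging.
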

Fixing a lattice, we can explicitly compute our bounds using a result of Eisenbrand \cite{Eis01} who gave a fast and simple method for computing shortest vectors of lattices. Thus, it is natural to ask: Which modular lattice gives the largest constant? We define 
\begin{equation*}
f(N,a,b):=\lambda(\Mlattice)/\sqrt{N}.
\end{equation*}
The following construction shows how to get a lattice with a constant close to 1 for every integer $N$.
\begin{thm} \label{thm1a}
Let $N$ be a positive integer. If $a=1$ and $b=\lfloor \sqrt{N} \rfloor-1$, then
$$f(N,a,b) = \frac{\sqrt{(\lfloor \sqrt{N} \rfloor-1)^2 + 1 } }{\sqrt{N}}. $$
\end{thm}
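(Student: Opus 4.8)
The plan is to prove the equivalent statement $\lambda(\Mlattice)^2 = b^2 + 1$ for $a=1$ and $b = \lfloor\sqrt N\rfloor - 1$; dividing by $\sqrt N$ then yields the asserted value of $f(N,a,b)$, and since $\gcd(N,1,b)=1$ automatically, the setup of the theorem applies. I would begin by rewriting the associated lattice $L = L((1,b),(N,0),(0,N))$ in a convenient congruence form. Writing a general element as $n(1,b) + k(N,0) + m(0,N) = (n+kN,\, nb+mN)$, the first coordinate forces $n \equiv x \pmod N$ and hence $y \equiv bx \pmod N$; conversely every such pair lies in $L$. Thus
\[
L = \{\, (x,y) \in \ZZ^2 : y \equiv bx \!\!\pmod N \,\}, \qquad \lambda(\Mlattice) = \min\{\, \sqrt{x^2+y^2} : (x,y) \in L\setminus\{0\} \,\}.
\]

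The upper bound is immediate: $(1,b) \in L$ (take $n=1$), so $\lambda(\Mlattice)^2 \le b^2 + 1$. The substance is the matching lower bound, namely that every nonzero $(x,y)\in L$ satisfies $x^2 + y^2 \ge b^2 + 1$. Using the symmetry $(x,y)\mapsto(-x,-y)$ I may assume $x \ge 0$. The cases $x=0$ (which forces $|y|\ge N$) and $x \ge b+1$ (which forces $x^2 \ge (b+1)^2 > b^2+1$) are disposed of at once, so the real work is confined to $1 \le x \le b$. Here the key elementary fact is $b^2 < N$, valid because $b < \lfloor\sqrt N\rfloor \le \sqrt N$; consequently $bx \in (0,N)$ and the smallest admissible value of $|y|$ is $\min(bx,\, N-bx)$.

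For $1 \le x \le b$ I would split according to whether the residue $bx$ has wrapped past $N/2$. If $bx \le N/2$ then $|y| \ge bx$, so $x^2 + y^2 \ge x^2(1+b^2) \ge b^2+1$, with equality exactly at $x=1$, i.e.\ at the vector $(1,b)$. The delicate subcase, which I expect to be the main obstacle, is $bx > N/2$: the naive estimate fails because the nearest admissible $y$ now sits at the wrapped residue, giving only $|y| \ge N - bx$. To control this I would exploit that $s := \lfloor\sqrt N\rfloor$ satisfies $N \ge s^2 = (b+1)^2$, together with $bx \le b^2$, to obtain
\[
N - bx \ge N - b^2 \ge (b+1)^2 - b^2 = 2b+1,
\]
whence $y^2 \ge (2b+1)^2 > b^2+1$. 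Collecting all cases shows the minimum equals $b^2+1$, attained precisely at $(1,b)$.

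The only remaining fussy points are the boundary value $bx = N/2$ (which is harmless, as either subcase estimate then gives a value far exceeding $b^2+1$) and the degenerate small-$N$ regime where $b$ may fail to be positive. The conceptual heart of the argument is the single inequality $N - b^2 \ge 2b+1$, coming directly from $N \ge (\lfloor\sqrt N\rfloor)^2$; this is exactly what makes the floor construction succeed and forces $(1,b)$ to realize the shortest distance.
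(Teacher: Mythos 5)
Your argument is correct, but it takes a genuinely different route from the paper. The paper derives Theorem \ref{thm1a} from Lemma \ref{lem:example}: it computes a basis of $L((1,b),(N,0),(0,N))$ following \cite{Rot97}, reduces it with Gauss's algorithm as in \cite{Eis01} to conclude that the reduced basis consists of $AB=(1,b)$ and the shorter of $AC$, $BC$, and then shows in a separate lemma that for $b=\lfloor\sqrt N\rfloor-1$ one has $x\ge\lceil\sqrt N\rceil$, so that $\|AB\|\le\sqrt N<\sqrt{N+1}<\min(\|AC\|,\|BC\|)$. You instead describe the lattice by the single congruence $y\equiv bx\pmod N$ and run a direct case analysis on $x$; your decisive inequality $N-b^2\ge(b+1)^2-b^2=2b+1$ in the wrapped subcase is exactly the same arithmetic fact ($N\ge\lfloor\sqrt N\rfloor^2$) that drives the paper's estimate $x\ge\lceil\sqrt N\rceil$, so the two proofs share their core. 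What your route buys is self-containedness (no basis-reduction machinery) and slightly weaker hypotheses: you never need the coprimality $\gcd(N,b)=1$ that Lemma \ref{lem:example} assumes, which the theorem as stated does not guarantee (e.g.\ $N=24$, $b=3$). What the paper's route buys is that the reduced basis also identifies the second-shortest direction, which feeds directly into the explicit TSP upper and lower bounds of Lemma \ref{lem:example}. Two small caveats in your sketch, both of which you essentially flag: the case $x\ge b+1$ needs $b\ge1$ for $(b+1)^2>b^2+1$, and the degenerate values $N\le3$ give $b=0$, outside the paper's standing assumption $0<b<N$; under that assumption your case analysis is complete.
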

In fact we can show even more; i.e. the constant in modular lattices is in general not upper bounded by 1. 
\begin{thm} \label{thm1b}
For infinitely many $N \in \NN$ there exists a pair of integers $(a,b)$ such that
$$ f(N,a,b)> 1. $$
\end{thm}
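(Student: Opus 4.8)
The plan is to exhibit an explicit infinite family of modular lattices whose shape is \emph{well-rounded but not square}, since for such a lattice the ratio $\lambda(\Mlattice)^2/N$ exceeds $1$. Recall that a planar lattice of covolume $N$ satisfies $\lambda^2 \le (2/\sqrt 3)\,N$ by Hermite's bound, with the hexagonal lattice attaining equality; this leaves room between the value $1$ and the hexagonal optimum $\approx 1.07$. The observation I would isolate first is the following: if a lattice $L$ of determinant $N$ admits a Lagrange--Gauss reduced basis $b_1,b_2$ with $|b_1|=|b_2|$ and $b_1\cdot b_2>0$, then $b_1$ is a shortest vector, $N=\det L=|b_1|^2\sin\theta$ with $\theta$ the angle between $b_1$ and $b_2$, and hence $\lambda(L)^2/N=1/\sin\theta>1$ precisely because $\theta<90^{\circ}$.

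With this in hand, I would take, for every integer $p\ge 4$,
\[
 N=p^2-1,\qquad a=p,\qquad b=1,
\]
and claim that the full lattice associated with $\Mlattice$ in $\RR^2$ is generated by $b_1=(p,1)$ and $b_2=(1,p)$. To verify this I would check the two inclusions between $\langle(p,1),(N,0),(0,N)\rangle$ and $\langle(p,1),(1,p)\rangle$; the decisive identity is
\[
 p\,(p,1)-(N,0)=(p^2-N,\,p)=(1,p),
\]
which also shows that $b_2\equiv p\,b_1$ modulo $N\ZZ^2$, so the quotient $\ZZ^2/L$ is cyclic of order $N$ and is generated by $(p,1)$. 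This is exactly the statement that the point set really is the modular lattice with $N$ distinct points, and $\gcd(N,p,1)=1$ holds trivially.

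The remaining and most delicate step is to certify that $\lambda(\Mlattice)=\sqrt{p^2+1}$ rather than some shorter vector. For this I would invoke planar lattice reduction: since $|b_1|=|b_2|=\sqrt{p^2+1}$ and $2\,|b_1\cdot b_2|=4p\le p^2+1$ exactly when $p\ge 2+\sqrt3$, the basis $\{b_1,b_2\}$ is reduced for $p\ge 4$, so $b_1$ realises the minimum. The angle then satisfies $\cos\theta=2p/(p^2+1)>0$, whence $\theta<90^{\circ}$ and
\[
 f(N,a,b)=\frac{\lambda(\Mlattice)}{\sqrt N}=\sqrt{\frac{p^2+1}{p^2-1}}>1 .
\]
Letting $p$ range over $\{4,5,6,\dots\}$ produces infinitely many $N=p^2-1$ with $f(N,a,b)>1$, which is the assertion. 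The main obstacle, as flagged, is the reducedness check: without it one only obtains $\lambda\le\sqrt{p^2+1}$ and the strict inequality could fail; the elementary estimate $4p\le p^2+1$ is what rescues the argument and forces the threshold $p\ge 4$.
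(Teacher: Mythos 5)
Your proof is correct, and it reaches the conclusion by a genuinely more explicit route than the paper. The paper takes $a=1$, $b=\lceil\sqrt{N}\rceil$ for a general $N$, reduces the claim to the inequality $2b<y^2$ for the remainder $y$ in $N=bx+y$ (this is exactly the condition that $AB=(1,b)$ is the shortest edge of a certain lattice triangle, hence a shortest vector), and then needs the equidistribution of $(\sqrt{N})_{N\geq 1}$ modulo $1$ together with a root analysis of a quartic $g(N,z)$ to produce infinitely many admissible $N$ (in fact all $N>87$ with $\{\sqrt{N}\}\in(3/4,1)$). You instead restrict to the explicit subfamily $N=p^2-1$, where (after swapping the two coordinates, which is an isometry and leaves $f$ unchanged) $\lceil\sqrt{N}\rceil=p$ and the associated lattice has the symmetric basis $(p,1),(1,p)$; the Lagrange--Gauss reducedness check $2|b_1\cdot b_2|=4p\leq p^2+1$ then certifies $\lambda=\sqrt{p^2+1}$ directly, with no equidistribution input, and yields the exact value $f(p^2-1,p,1)=\sqrt{(p^2+1)/(p^2-1)}>1$. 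All your computations check out: the two generating sets span the same lattice of determinant $p^2-1$, $\gcd(N,p,1)=1$, and $p^2+1-4p=(p-2)^2-3\geq 1$ for $p\geq 4$. What the paper's argument buys is a much denser set of admissible $N$; what yours buys is a self-contained family with a closed-form constant. It is worth noting that your family is precisely the specialization of the paper's construction to $N=p^2-1$: there $N=p(p-1)+(p-1)$, so $y=p-1$ and the paper's condition $2b<y^2$ becomes $2p<(p-1)^2$, which is exactly your threshold $p\geq 4$.
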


\begin{rem}
The point sets we study are the prime examples of \emph{low discrepancy} point sets which are widely used in numerical integration; see \cite{KN,N}. Steele \cite{Ste80} and Steinerberger \cite{Ste10} connected the uniform distribution properties of a point set to the length of the shortest path through the points by providing bounds for the traveling salesman path in terms of the \emph{discrepancy} of the point set. 
The lattices with the smallest discrepancy are those generated from a prime $N$ with $a=1$ and $1\leq b \leq N-1$ such that the continued fraction expansion of $b/N$ has the smallest possible partial quotients \cite{KN,Pau}.
Interestingly, our results show that the point sets with the smallest discrepancy are in general \emph{not} those with the longest shortest path for a fixed $N$ and $1\leq b \leq N-1$! 
\end{rem}

In a second step, we use classical results from the field of diophantine approximation to relate Kronecker point sets to modular lattices. That is, if $(p_i/q_i,r_i/q_i)$ is the $i$-th convergent of $(\alpha,\beta)$ for irrationals $0<\alpha,\beta<1$,
then the shortest path through the points of the corresponding Kronecker point set $\KsetQ$ with $N_i=q_i$ elements can be accurately approximated via the shortest path through the modular lattice $L_{N_i,p_i,r_i}$:
\begin{thm} \label{thm2}
Let $\alpha, \beta \in (0,1)$ be irrationals such that $1,\alpha, \beta$ are linearly independent over $\QQ$ and let $(p_i/q_i, r_i/q_i)$ be the $i$-th convergent of $(\alpha,\beta)$. If $N_i=q_i$ and $\KsetQ \subseteq [0,1]^2$ denotes the corresponding Kronecker set, then
\begin{align*}
\frac {\lambda(\MlatticeQ)}{\sqrt{N_i}} \left( 1 - \frac{5}{N_i^{\delta}} \right) \leq
\frac{\mathcal{L}( \KsetQ )}{\sqrt{N_i}} \leq
\frac{ \lambda(\MlatticeQ)}{\sqrt{N_i}} \left( 1+  \frac{3}{N_i^{\delta} } \right ) + \frac{2 \sqrt{2}}{\sqrt{N_i}}, 
\end{align*}
for a constant $\delta=\delta(\alpha,\beta)>0$.
\end{thm}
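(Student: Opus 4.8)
The plan is to transfer everything to the modular lattice $\MlatticeQ$ and then apply Theorem \ref{thm1}. Write $\theta=(\alpha,\beta)$ and pair the $n$-th Kronecker point $x_n=(n\alpha\bmod 1,\,n\beta\bmod 1)$ with the rescaled lattice point
$$
y_n:=\tfrac1{N_i}\bigl(np_i\bmod N_i,\;nr_i\bmod N_i\bigr)=\bigl(n\tfrac{p_i}{q_i}\bmod 1,\;n\tfrac{r_i}{q_i}\bmod 1\bigr),\qquad 0\le n<N_i .
$$
Since $\{y_n\}=\tfrac1{N_i}\MlatticeQ$ and the traveling salesman path scales linearly, $\LL(\tfrac1{N_i}\MlatticeQ)=\tfrac1{N_i}\LL(\MlatticeQ)$, so that Theorem \ref{thm1}, applied to $\MlatticeQ$ and rewritten via $\LL(\MlatticeQ)/(N_i\sqrt{N_i})=\LL(\tfrac1{N_i}\MlatticeQ)/\sqrt{N_i}$, reads
$$
\frac{\lambda(\MlatticeQ)}{\sqrt{N_i}}\ \le\ \frac{\LL(\tfrac1{N_i}\MlatticeQ)}{\sqrt{N_i}}\ \le\ \frac{\lambda(\MlatticeQ)}{\sqrt{N_i}}+\frac{2\sqrt2}{\sqrt{N_i}} .
$$
Thus it is enough to bound $\bigl|\LL(\KsetQ)-\LL(\tfrac1{N_i}\MlatticeQ)\bigr|$ and to insert the result into this chain; in particular the additive term $2\sqrt2/\sqrt{N_i}$ in the statement is inherited directly from Theorem \ref{thm1}.

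For the comparison I would couple the two point sets by the common index $n$. Because $(p_i/q_i,r_i/q_i)$ is the $i$-th convergent of $\theta$, set $\varepsilon_i:=\lVert q_i\theta-(p_i,r_i)\rVert$; then the toroidal displacement of paired points obeys
$$
\lVert x_n-y_n\rVert_{\mathbb{T}}\ \le\ n\,\bigl\lVert\theta-(p_i,r_i)/q_i\bigr\rVert\ =\ \frac{n}{N_i}\,\varepsilon_i\ \le\ \varepsilon_i .
$$
Running the optimal permutation of $\tfrac1{N_i}\MlatticeQ$ on $\KsetQ$ and applying the triangle inequality edge by edge yields $\bigl|\LL(\KsetQ)-\LL(\tfrac1{N_i}\MlatticeQ)\bigr|\le 2\sum_n\lVert x_n-y_n\rVert$, whose toroidal part is $\le 2\sum_n\frac n{N_i}\varepsilon_i\lesssim N_i\varepsilon_i$. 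The one genuine nuisance is that Euclidean distance in $[0,1]^2$ and toroidal distance disagree when a point sits within $\varepsilon_i$ of the boundary of the square; since the $y_n$ are equidistributed, only $O(N_i\varepsilon_i)$ indices are affected, each contributing $O(1)$, so the wrap-around correction is again $O(N_i\varepsilon_i)$ and $\bigl|\LL(\KsetQ)-\LL(\tfrac1{N_i}\MlatticeQ)\bigr|\le c\,N_i\varepsilon_i$ for an absolute constant $c$.

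It remains to absorb $c\,N_i\varepsilon_i$ into the relative form, i.e. to show $c\,N_i\varepsilon_i\le 3\,\lambda(\MlatticeQ)\,N_i^{-\delta}$ (respectively with constant $5$) for a suitable $\delta=\delta(\alpha,\beta)>0$; the slightly different constants $3$ and $5$ come only from the asymmetric bookkeeping of the wrap-around in the two directions. This last step is the crux, and it is exactly where the strong convergence of the Jacobi--Perron type algorithm producing the convergents is needed: strong convergence must supply a power saving over Dirichlet's theorem, $\varepsilon_i\le C\,N_i^{-1/2-\delta}$, so that $N_i\varepsilon_i\le C\,N_i^{1/2-\delta}$, while the Diophantine properties of $\theta$ keep $\lambda(\MlatticeQ)$ from being anomalously small, $\lambda(\MlatticeQ)\gtrsim\sqrt{N_i}$ (the covolume of $\tfrac1{N_i}\MlatticeQ$ equals $1/N_i$, and a well-distributed $\theta$ admits no exceptionally short lattice vector). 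I expect this quantitative interplay---pinning down the admissible exponent $\delta$ from the convergence rate and simultaneously controlling $\lambda(\MlatticeQ)$ from below---to be the main obstacle; the triangle-inequality coupling and the bookkeeping of the wrap-around, by contrast, are routine.
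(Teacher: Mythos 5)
Your reduction to Theorem \ref{thm1} plus a perturbation estimate is the right general shape, and your stability inequality $|\LL(X)-\LL(Y)|\le 2\sum_n\|x_n-y_n\|$ is correct. The gap is quantitative, and it is fatal to the route as you set it up. Strong convergence (Theorem \ref{thm:delta}) gives $|\alpha-p_i/q_i|<q_i^{-1-\delta}$ with $\delta\le 1/2$, hence $\varepsilon_i=\|q_i\theta-(p_i,r_i)\|<\sqrt2\,q_i^{-\delta}$, so your coupling error is
$2\sum_n \tfrac{n}{N_i}\varepsilon_i \asymp N_i\varepsilon_i \lesssim N_i^{1-\delta}$,
whereas the slack permitted by the theorem is $3\lambda(\MlatticeQ)N_i^{-\delta}\le \tfrac92 N_i^{1/2-\delta}$. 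You are off by a factor of $\sqrt{N_i}$. You notice this and ask strong convergence to deliver $\varepsilon_i\le C N_i^{-1/2-\delta}$, i.e.\ $|\alpha-p_i/q_i|\lesssim q_i^{-3/2-\delta}$; but by Perron's result quoted in Section \ref{sec2} the exponent $1+1/d=3/2$ is optimal for simultaneous approximation of two numbers, so a uniform power saving beyond $3/2$ is unavailable for almost every pair (and fails for the algebraic pairs used in the paper's example). The hypothesis your argument needs is therefore not just unproved but false in the generality required.

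The paper escapes this by not coupling edge by edge. It partitions $\MlatticeQ$ into $k$ lines parallel to the shortest vector $v_i$, with $k\le 2\lambda(\MlatticeQ)=\mathcal{O}(\sqrt{N_i})$, and observes that along a single line the per-edge displacements $(n_{j+1}-n_j)d$ telescope: applying the triangle inequality to the \emph{sum} over a line gives an error of only $|n_{J_h}-n_0|\,\|d\|\le q_i\|d\|<\sqrt2\,q_i^{-\delta}$ per line, hence a total error $\le k\,q_i\|d\|\le 2\sqrt2\,\lambda(\MlatticeQ)q_i^{-\delta}$. This is simultaneously small enough and automatically proportional to $\lambda$, which is why the paper needs no lower bound on $\lambda(\MlatticeQ)$ of the kind you invoke (``$\lambda\gtrsim\sqrt{N_i}$''), a bound that is likewise not established anywhere and not needed. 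To repair your argument you would have to replace the index-wise summation $\sum_n\|x_n-y_n\|$ by this line-wise cancellation; without it the approach does not close.
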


\begin{rem}
We double checked our bounds with the lengths of the shortest paths computed via the built-in function $\mathit{FindShortestTour}$ of the computer algebra system $\mathit{Mathematica}$. The numbers agree well in general, however, we noticed that the algorithm underlying $\mathit{FindShortestTour}$ sometimes inserts unnecessary line segments between lines parallel to the shortest vector of a lattice, thus giving slightly longer paths in these cases.
\end{rem}

\subsection{Additional remarks and open questions.} 
We have calculated the constants of modular lattices for all $2\leq N \leq 750$ and all pairs $(a,b)$ with $1\leq a,b \leq N-1$ and $\mathrm{gcd}(N,a,b)=1$. We plot $f_{\max}(N):= \max_{1 \leq a,b \leq N} f(N,a,b)$ in Figure \ref{sup} and note that the largest value we obtained is $f(209,1,56)=1.07383$.
We can compare this observation with the result of Karloff \cite{Kar89} who determined a general upper bound of $1.39159 \sqrt{N} + 11$ for the length of the traveling salesman tour through any set of $N$ points in $[0,1]^2$. If we plug our numerical results into the upper bound from Theorem \ref{thm1} we improve Karloff's bound for all modular lattices with $N\leq 750$. We are immediately led to ask:
\begin{qu}
Is there a modular lattice with $f(N,a,b)>f(209,1,56)$?
Is there a modular lattice $\Mlattice$ that maximizes $f(N,a,b)$? 
\end{qu}
Furthermore, Figure \ref{sup} suggests two potential strengthenings of Theorem \ref{thm1b}:
\begin{qu}
Is there a modular lattice $\Mlattice$ with $f(N,a,b)>1$ for every (prime) $N>N_0$?
\end{qu}
\begin{qu}
Is there an absolute $\varepsilon>0$ such that $f_{\max}(N)>1+\varepsilon$ for infinitely many $N>N_0$?
\end{qu}
Next, it is interesting to note that the sequence 
\begin{equation} \label{limit}
\left( f(N_i, p_i, r_i) \right)_{i\geq 1}
\end{equation}
of values that approximate the length of the TSP of $\KsetQ$
does not seem to converge to a limit. We illustrate this oscillating behavior in Example \ref{example2} and suspect it to be the generic case. 
\begin{pr}\label{prop1} Prove or disprove the existence of a limit for the sequence defined in \eqref{limit}. \end{pr}
We remark, that such an oscillating behavior was already observed by Platzman and Bartholdi \cite{Pla89} and Gao and Steele \cite{Gao94} in the context of the \emph{spacefilling curve heuristic} and of course in the above mentioned context of stationary ergodic processes \cite{AS15}.
Finally, it is natural to ask
\begin{qu} Are there other families of point sets for which precise asymptotic results can be obtained?\end{qu}
Interesting candidates could be jittered sampling sets \cite{PS16} or the well-known Hammersley resp. Halton point sets \cite{N,Pau}.

\begin{center}
\begin{figure}[h!]
\includegraphics[scale=0.7]{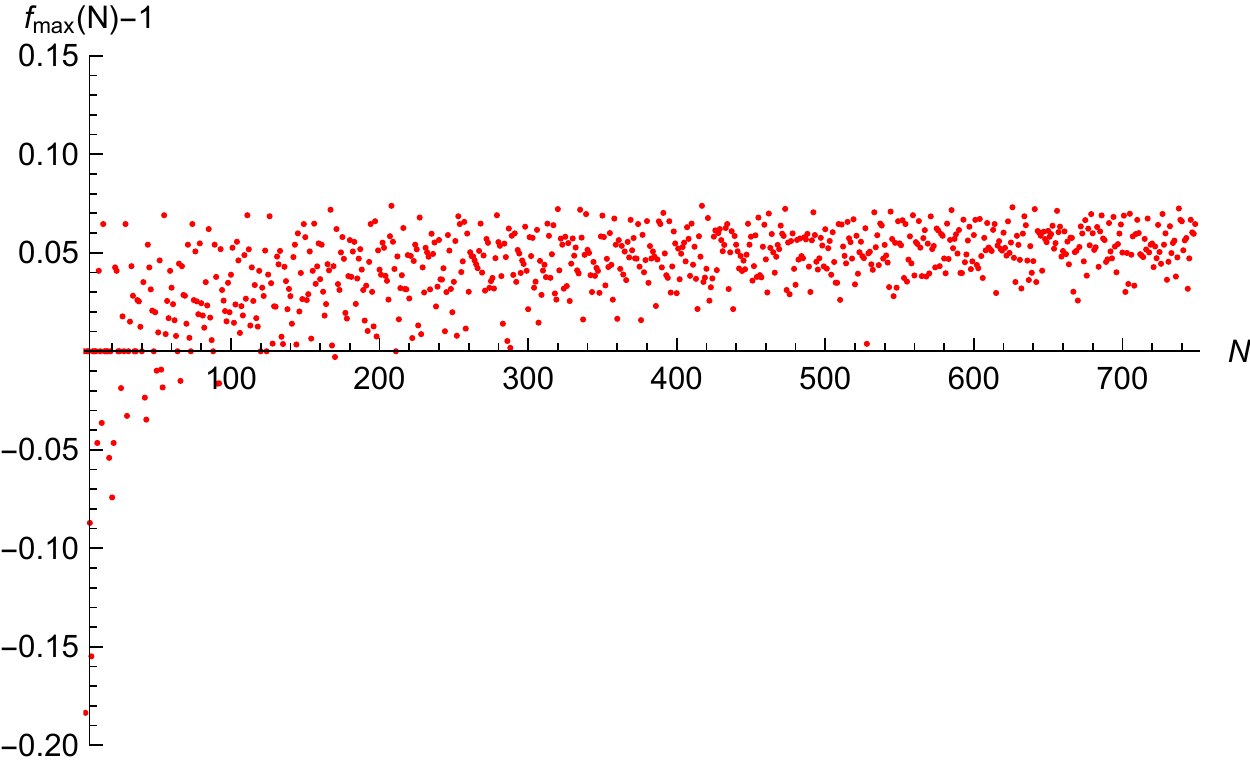}

\caption{ Plot of $f_{\max}(N)-1$ for $2\leq N \leq 750$.} \label{sup}
\end{figure}
\end{center}

\subsection{Outline} In Section \ref{sec2} we recall important facts about shortest vectors of lattices as well as rational approximations of (irrational) vectors in $\RR^2$. Section \ref{sec3} contains our results for modular lattices. Finally, we prove Theorem \ref{thm2} in Section \ref{sec41} and illustrate Problem \ref{prop1} with an example in Section \ref{example2}.

\section{Background}
\label{sec2}

\subsection{Rational approximations and continued fractions}
We refer to the book of Schweiger \cite{Sch} for a thorough introduction to the theory of multi-dimensional continued fractions and to the survey of Berth\'{e} \cite[Section 4]{Ber10} for a recent and well written overview.
The classical continued fraction algorithm produces, for every irrational $\alpha \in \RR$, a sequence of rational numbers $p_i/q_i$ that approximate $\alpha$ up to an error of order $1/q_i^2$. 
In the course of this paper, we need a multi-dimensional analogue that allows us to approximate a pair of irrationals $(\alpha,\beta) \in (0,1)^2$ by rational vectors.
There are several Jacobi-Perron type algorithms at our disposal; see \cite{Mee99, Schr98, Sch96}.
Each of these algorithms takes a pair of independent irrationals $(\alpha, \beta)$ as an input and outputs a sequence of integer triples $(q_i,p_i,r_i)$ called convergents, such that
\begin{equation*}
\max( |\alpha-p_i/q_i|, |\beta - r_i/q_i| ) \rightarrow 0, \text{ as } i \rightarrow \infty,
\end{equation*} 
which is known as \emph{weak convergence} of the algorithm. In fact, an even stronger result holds for these algorithms:
\begin{thm}\label{thm:delta}
There exists a constant $1/2\geq \delta > 0$ such that for almost every pair of numbers $(\alpha,\beta) \in [0,1]^2$ there exists $i_0=i_0(\alpha,\beta)$ such that for any $i>i_0$,
\begin{equation*}
\left | \alpha - \frac{p_i}{q_i} \right | < \frac{1}{q_i^{1+\delta}}, \left | \beta - \frac{r_i}{q_i} \right | < \frac{1}{q_i^{1+\delta}},
\end{equation*}
where $(q_i,p_i,r_i)$ is the $i$-th convergent of $(\alpha,\beta)$.
\end{thm}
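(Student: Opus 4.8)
The plan is to realize the convergents produced by a Jacobi--Perron type algorithm as a linear cocycle over an ergodic dynamical system and to read off the approximation quality from its top two Lyapunov exponents. First I would encode a single step of the algorithm as a measurable map $T$ on its natural domain $\Omega \subseteq [0,1]^2$ of admissible pairs, together with the integer matrices $M_j = M\bigl(T^{j-1}(\alpha,\beta)\bigr) \in \mathrm{SL}_3(\ZZ)$ whose product $P_i := M_1 M_2 \cdots M_i$ has as its columns three consecutive convergent triples $(q_\bullet, p_\bullet, r_\bullet)$. The works \cite{Mee99, Schr98, Sch96} guarantee that $T$ carries an ergodic invariant probability measure $\mu$ that is absolutely continuous with respect to Lebesgue measure; this absolute continuity is exactly what converts the $\mu$-almost-everywhere statements below into statements valid for \emph{almost every} $(\alpha,\beta)$ in the Lebesgue sense.

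Next I would apply Oseledets' multiplicative ergodic theorem to the cocycle $(M_j)$. For $\mu$-almost every point it yields three Lyapunov exponents $\theta_1 \geq \theta_2 \geq \theta_3$, and, crucially, \emph{ergodicity makes these constant almost everywhere}, so that the $\delta$ we build from them is a genuine constant independent of the pair. Since $\det M_j = 1$ we have $\det P_i = 1$, forcing the normalization $\theta_1 + \theta_2 + \theta_3 = 0$. A direct inspection of the columns of $P_i$ identifies the growth of the denominator with the top exponent and the decay of the simultaneous error with the second one:
\begin{equation*}
\frac{1}{i}\log q_i \longrightarrow \theta_1 > 0, \qquad \frac{1}{i}\log \max\bigl(|q_i\alpha - p_i|,\, |q_i\beta - r_i|\bigr) \longrightarrow \theta_2.
\end{equation*}
Eliminating $i$ between these two limits gives $\max\bigl(|\alpha - p_i/q_i|,\,|\beta - r_i/q_i|\bigr) = q_i^{-(1 - \theta_2/\theta_1) + o(1)}$, so every $\delta < -\theta_2/\theta_1$ satisfies the claimed bound once $i$ exceeds a threshold $i_0(\alpha,\beta)$.

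The main obstacle, and the one genuinely deep input, is to prove that the second exponent is strictly negative, $\theta_2 < 0$: only then does the exponent $1 - \theta_2/\theta_1$ strictly exceed $1$ and leave room for a positive $\delta$. This strict negativity of the middle Lyapunov exponent is precisely the \emph{strong convergence} of Jacobi--Perron type algorithms established in \cite{Mee99, Schr98, Sch96}, and I would invoke it rather than reprove it, since it rests on a careful analysis of the transfer operator and the geometry of the algorithm's cylinders. Finally, the upper bound $\delta \leq 1/2$ is automatic from the spectral constraints: combining $\theta_3 \leq \theta_2$ with $\theta_1 + \theta_2 + \theta_3 = 0$ yields $\theta_2 \geq -\theta_1/2$, hence $-\theta_2/\theta_1 \leq 1/2$, and one takes $\delta$ to be any positive number below $-\theta_2/\theta_1$ (in particular $\delta \leq 1/2$).
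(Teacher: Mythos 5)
Your proposal is essentially correct in outline, but it is worth being clear that the paper offers no proof of this statement at all: Theorem~\ref{thm:delta} is quoted as a known result of the cited literature \cite{Mee99, Schr98, Sch96}, so any comparison is really with those sources. Your Lyapunov-exponent framework is precisely the mechanism underlying them: the convergents arise as columns of products of the unimodular partition matrices of the algorithm, Oseledets' theorem over the absolutely continuous ergodic invariant measure gives exponents $\theta_1\geq\theta_2\geq\theta_3$ summing to zero and constant almost everywhere, the growth of $q_i$ is governed by $\theta_1$, and the admissible $\delta$ is any positive number below $-\theta_2/\theta_1$, with the cap $\delta\leq 1/2$ following from $2\theta_2\geq\theta_2+\theta_3=-\theta_1$ exactly as you say (consistent with Perron's optimality bound $1+1/d$ \cite{Per21}). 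Two caveats. First, the step you describe as ``direct inspection of the columns of $P_i$'' is not direct for the error term: identifying $\frac{1}{i}\log\max(|q_i\alpha-p_i|,|q_i\beta-r_i|)$ with $\theta_2$ (even just the upper bound $\limsup\leq\theta_2$, which is all you need) requires relating the error vector to the contracting Oseledets subspace and checking that the direction determined by $(1,\alpha,\beta)$ is in general position with respect to the filtration; this is the content of the quantitative analysis in \cite{Mee99} and is not a formal consequence of Oseledets alone. Second, the genuinely deep input, $\theta_2<0$, is exactly the statement of strong convergence, and you invoke it from the same references that the theorem itself cites; so your argument is best read as a correct reduction of the stated form of the theorem (polynomial exponent $1+\delta$ in $q_i$) to the exponential-scale statements proved in \cite{Mee99, Schr98, Sch96}, rather than as an independent proof. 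That reduction is sound and is in fact how Meester's Corollary~1 is converted into the form used in this paper.
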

This stronger convergence property is also referred to as \emph{strong convergence}. 
Furthermore, Meester points out in the last paragraph of \cite{Mee99} how to calculate explicit values for $\delta=\delta(\alpha,\beta)$ and he states the exact behavior of the $q_i$ on the exponential scale \cite[Corollary 1]{Mee99}. In general, it is known from a classical result of Perron \cite{Per21} that the optimal exponent of convergence any approximation algorithm can achieve is $1+1/d$, in which $d$ denotes the dimension. Thus, $\delta=1/2$ is the maximum we can hope for in two dimensions. However, there is no canonical algorithm that works for all pairs $(\alpha,\beta)$ equally well.

\subsection{Computing shortest Vectors}
Given a lattice $L=L((a,b),(m,0),(0,m))$ generated by the integer linear combinations of the vectors $(a,b),(m,0),(0,m)$, we can find a minimal set of generators, or basis, following \cite[Section 3]{Rot97}. 
A reduced basis for a lattice (in the sense of Minkowski) consists of two vectors $x_1$, $x_2$ with the following properties:
\begin{itemize}
\item $x_1$ is a shortest non-zero lattice vector.
\item $x_2$ is a shortest vector among the lattice vectors which are not parallel to $x_1$.
\end{itemize}
Lattice basis reduction is an important technique in computer science with an abundance of applications. Already Gauss invented an algorithm that finds a reduced basis of a two-dimensional integral lattice. That is, the algorithm takes a basis as an input and outputs a new basis with potentially shorter basis vectors; see \cite{Eis01}.

\section{Results for modular lattices}
\label{sec3}

In this section we prove our results for modular lattices. In Section \ref{sec31} we prove Theorem \ref{thm1}. In Section \ref{sec32} we illustrate our method and compute explicit bounds for particular lattices, thus proving Theorem \ref{thm1a} and Theorem \ref{thm1b}

\subsection{General result} \label{sec31}
In the following let $\mathrm{gcd}(N,a,b)=1$.
We start with an observation about shortest distances in modular lattices.

\begin{lem} \label{lem:help}
If $\Mlattice$ is a modular lattice, then $\lambda(\Mlattice) \leq 3/2 \sqrt{N}$.
\end{lem}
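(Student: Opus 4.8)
The plan is to work with the genuine planar lattice $L=L((a,b),(N,0),(0,N))$, since the paper has already identified $\lambda(\Mlattice)$ with the length of the shortest nonzero vector of $L$. The key quantity is the covolume (area of a fundamental domain) of $L$, and I claim it equals $N$. First I would observe that $L$ contains the sublattice $N\ZZ^2=\langle (N,0),(0,N)\rangle$, whose covolume is $N^2$, so it suffices to compute the index $[L:N\ZZ^2]$. The quotient $L/N\ZZ^2$ is the cyclic subgroup of $(\ZZ/N\ZZ)^2$ generated by the class of $(a,b)$, so its order is exactly the order of $(a,b)$ in $(\ZZ/N\ZZ)^2$, i.e. the smallest $n>0$ with $na\equiv nb\equiv 0 \pmod N$. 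Using $\gcd(N,a,b)=1$ I would show this order is $N$: if $na\equiv nb\equiv 0\pmod N$ and $d=\gcd(n,N)$, then $N/d$ divides both $a$ and $b$, hence $N/d \mid \gcd(N,a,b)=1$, forcing $d=N$ and $N\mid n$. (Equivalently, this is just the statement that the $N$ listed points are pairwise distinct.) Therefore $[L:N\ZZ^2]=N$ and $\mathrm{covol}(L)=N^2/N=N$.

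With the covolume in hand, the bound follows from Minkowski's convex body theorem. I would take the open disk $D$ of radius $\tfrac32\sqrt N$ centred at the origin; it is convex and centrally symmetric, and its area is $\tfrac{9\pi}{4}N$, which strictly exceeds $4N=4\,\mathrm{covol}(L)$ since $9\pi/4>4$. Minkowski's theorem then produces a nonzero vector $v\in L\cap D$, so that $\lambda(L)\le\|v\|<\tfrac32\sqrt N$, which is the claimed inequality (in fact with strict inequality).

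The only step that needs genuine care is the covolume computation, namely verifying that the index of $N\ZZ^2$ in $L$ is exactly $N$; everything downstream is a routine application of Minkowski, and I do not anticipate a real obstacle, because $\gcd(N,a,b)=1$ is precisely the hypothesis that makes the $N$ sample points distinct. If one prefers to avoid Minkowski, the same bound can essentially be reached by a pigeonhole argument: partitioning $[0,N)^2$ into $k^2$ congruent boxes with $k=\lfloor\sqrt{N-1}\rfloor$ forces two of the $N$ points into one box, and their difference is a nonzero vector of $L$ of length at most $\sqrt2\,(N/k)$, which lies below $\tfrac32\sqrt N$ for all but a handful of small $N$. I would nonetheless adopt the Minkowski route, as it is cleaner and has no small-$N$ exceptions.
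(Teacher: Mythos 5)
Your proof is correct, but it takes a genuinely different route from the paper's. The paper argues by packing: if $x=\lambda(\Mlattice)$, then at least a sixth of an open disc of radius $x$ attached to each of the $N$ lattice points contains no other point, so (ignoring boundary effects) $N\pi x^2/6\le (N-1)^2$, which gives $x\le\sqrt{6/\pi}\,\sqrt{N}<\tfrac{3}{2}\sqrt{N}$. You instead compute the covolume of $L=L((a,b),(N,0),(0,N))$ exactly --- your index computation $[L:N\ZZ^2]=N$ under $\gcd(N,a,b)=1$ is correct and, as you say, is just the distinctness of the $N$ sample points --- and then apply Minkowski's convex body theorem to the open disc of radius $\tfrac{3}{2}\sqrt{N}$, whose area $\tfrac{9\pi}{4}N$ exceeds $4\,\mathrm{covol}(L)=4N$. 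What your approach buys: it is fully rigorous where the paper's proof must wave away boundary effects (the sixth-discs of points near the edge of $[0,N-1]^2$ protrude outside the square), it makes the role of the hypothesis $\gcd(N,a,b)=1$ completely explicit, and if one optimizes the radius it yields the stronger bound $\lambda(\Mlattice)\le 2\sqrt{N/\pi}\approx 1.13\sqrt{N}$. What the paper's argument buys is self-containedness: it uses nothing beyond an elementary area count. One small caveat on your side remark: the pigeonhole fallback fails for considerably more than ``a handful'' of small $N$ (roughly $N$ up to a few hundred, since one needs $\sqrt{2}\,N/\lfloor\sqrt{N-1}\rfloor\le\tfrac{3}{2}\sqrt{N}$), but since you adopt the Minkowski route this does not affect your proof.
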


\begin{proof}
Let $v$ be a shortest vector of $L((a,b),(N,0),(0,N)) \subset [0,N-1]^2$ and assume that $\|v\| = x$. Then by linearity there is always at least a sixth of an open disc of radius $x$ attached to every point in the lattice that contains no other point; see Figure \ref{fig:short} (left).
There are $N$ points in the modular lattice such that we get (ignoring boundary effects) the following condition on $x$:
\begin{equation*}
N \cdot \frac{x^2 \pi}{6} \leq (N-1)^2.
\end{equation*}
This implies that $x\leq 3/2 \sqrt{N}$.
\end{proof}

\begin{center}
\begin{figure}[h!]

\subfigure{
\begin{tikzpicture}[scale=0.2]
\draw[thick] (0,6)--(0,0);
\draw[thick] (5,3)--(0,0);
\draw[dashed] (0,6)--(5,3);

\draw[dashed] (5,3) -- (5,15);
\draw[dashed] (10,0) -- (10,12);
\draw[dashed] (0,6) -- (0,12);
\draw[dashed] (0,6) -- (10,12);
\draw[dashed] (5,3) -- (10,6);

\node at (0,0) {$\bullet$}; 
\node at (0,6) {$\bullet$}; 
\node at (0,12) {$\bullet$};
\node at (5,3) {$\bullet$}; 
\node at (5,9) {$\bullet$}; 
\node at (5,15) {$\bullet$}; 
\node at (10,0) {$\bullet$}; 
\node at (10,6) {$\bullet$}; 
\node at (10,12) {$\bullet$}; 

\draw [very thick](0,6) arc (90:30:6cm);
\draw [dashed](0,12) arc (90:30:6cm);
\draw [dashed](5,3) arc (90:30:6cm);
\draw [dashed](5,9) arc (90:30:6cm);
\draw [dashed](5,15) arc (90:30:6cm);

\draw [dotted, thick](0,2) arc (90:30:2cm);

\node at (-1,3) {\small $x$}; 
\node at (3,0) {\small $x$}; 
\node at (0.6,1.1) {\tiny $\alpha$};
\node at (1.8,3.5) {\tiny $d$};

\end{tikzpicture}

} \qquad \qquad \qquad
\subfigure{
\begin{tikzpicture}[scale=0.4]
\draw[thick] (0,6)--(0,0);
\draw[thick] (5,3)--(5,9);
\draw[dashed] (0,3) -- (5,3);
\draw[dashed] (5,0) -- (5,3);
\draw[dotted] (0,0)--(5,3);
\draw[dotted] (0,6)--(5,3);
\node at (0,0) {$\bullet$}; 
\node at (0,6) {$\bullet$}; 
\node at (5,3) {$\bullet$}; 
\node at (5,9) {$\bullet$}; 

\node at (5,0) {$\circ$}; 
\node at (5,6) {$\circ$};

\node at (-1,0) {\small $x_1^h$}; 
\node at (-1,6) {\small $x_2^h$}; 
\node at (6.2,3) {\small $x_1^{h+1}$}; 
\node at (-1,3) {\small $x$}; 
\node at (2.5,1) {\small $d$}; 

\end{tikzpicture}
}

\caption{\emph{Left:} An extremal configuration. The minimal distance between any two points in the lattice is $x=\|v\|$. Hence, $d\geq x$ such that $\alpha \in [\pi/3, \pi/2]$ and there is at least a sixth of an open disc attached to each point that contains no other point of the lattice.
\emph{Right: } The distance $d$ between points on neighboring lines.} \label{fig:short}
\end{figure}
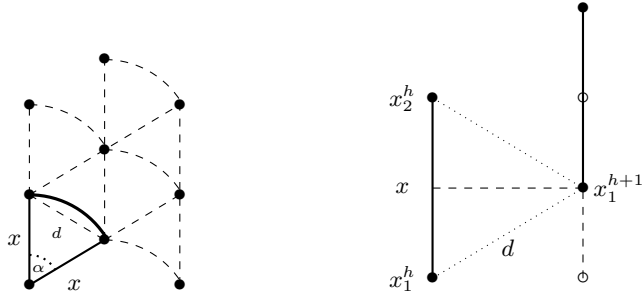
\end{center}

We remark that Lemma \ref{lem:help} implies that the points in any modular lattice $\Mlattice$ can be partitioned into at most $k=k(\Mlattice) \in \mathcal{O}(\sqrt{N})$ parallel lines such that the distance of neighboring points on a line is $\lambda(\Mlattice)$; compare with the image in the middle of Figure \ref{fig:pointsets}. In particular, we always have $$ k(\Mlattice) \leq 2 \lambda(\Mlattice). $$

\begin{lem} \label{lem1}
Let $\Mlattice$ be a modular lattice, then
\begin{equation*}
\frac{\lambda(\Mlattice)}{\sqrt{N}} \leq  \frac{\mathcal{L}(\Mlattice)}{N \sqrt{N}} \leq \frac{\lambda(\Mlattice) }{\sqrt{N}} + \frac{2 \sqrt{2}}{\sqrt{N}}.
\end{equation*}
\end{lem}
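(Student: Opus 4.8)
The plan is to prove the two inequalities separately, using the structural fact—established via Lemma \ref{lem:help} and recorded in the remark immediately preceding the statement—that the $N$ points of $\Mlattice$ can be partitioned into $k=k(\Mlattice) \in \mathcal{O}(\sqrt{N})$ parallel lines, where consecutive points on each line are at the minimal distance $\lambda(\Mlattice)$, and where $k \leq 2\lambda(\Mlattice)$.

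For the \textbf{lower bound}, the idea is that \emph{every} edge of \emph{any} tour has length at least $\lambda(\Mlattice)$, since $\lambda(\Mlattice)$ is by definition the shortest distance between two distinct points of the lattice. A closed tour on $N$ points uses exactly $N$ edges, so $\mathcal{L}(\Mlattice) \geq N \lambda(\Mlattice)$. Dividing by $N\sqrt{N}$ yields the left-hand inequality directly.

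For the \textbf{upper bound}, I would exhibit an explicit admissible tour and estimate its length from above, following the boustrophedon (snake) strategy already sketched for the regular grid in the introduction. First I would traverse each of the $k$ parallel lines end to end: a single line with $m$ points contributes $(m-1)\lambda(\Mlattice)$, so summing over all lines and using $\sum m = N$ gives a total of at most $(N-k)\lambda(\Mlattice) \leq N \lambda(\Mlattice)$ for the ``within-line'' portion. Then I would add the cost of connecting consecutive lines and closing the tour. The key geometric input is that the gap $d$ between neighboring lines (the quantity in Figure \ref{fig:short}, right) is itself controlled: since $x_2$, the shortest vector not parallel to $x_1$, reaches to the neighboring line, one can connect the end of one line to the start of the next at cost at most $d \leq \sqrt{2}\,\lambda(\Mlattice)$ (or more crudely by a short lattice step). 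With $k-1$ such connecting segments plus one closing segment, and using $k \leq 2\lambda(\Mlattice)$, the connecting cost is bounded by roughly $2\sqrt{2}\,\lambda(\Mlattice)^2$. Collecting terms gives $\mathcal{L}(\Mlattice) \leq N\lambda(\Mlattice) + 2\sqrt{2}\,\lambda(\Mlattice)^2$, and dividing by $N\sqrt{N}$ produces the claimed $\frac{\lambda(\Mlattice)}{\sqrt{N}} + \frac{2\sqrt{2}\,\lambda(\Mlattice)^2}{N\sqrt{N}}$; since $\lambda(\Mlattice) \leq \tfrac{3}{2}\sqrt{N}$ one must then verify this matches the stated $\frac{2\sqrt{2}}{\sqrt{N}}$ term, which suggests the intended bound on the inter-line cost is in fact $O(\sqrt{N})$ rather than quadratic in $\lambda$.

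I expect the \textbf{main obstacle} to be pinning down the exact form of the error term $\frac{2\sqrt{2}}{\sqrt{N}}$. A naive snake bound gives a connecting cost proportional to $k \cdot d$, and since both $k$ and $d$ can be of order $\sqrt{N}$ this threatens an $O(1)$ rather than $O(1/\sqrt{N})$ error after normalization. The resolution must come from a sharper accounting: the connecting segments between lines are short relative to the total in-line length, so the constant $2\sqrt{2}$ should emerge from bounding the total detour cost by $k \cdot d$ with $d \leq \sqrt{2}\,\lambda$ and $k \leq 2\lambda$, then comparing against $N\sqrt{N}$ using $\lambda^2 \leq \tfrac{9}{4}N$. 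Getting the constants to land exactly on $2\sqrt{2}$ will require care with the closing edge (of length at most $\sqrt{2}\sqrt{N}$, analogous to the grid case) and with boundary lines, and this bookkeeping is where the argument is most delicate.
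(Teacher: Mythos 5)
Your lower bound is exactly the paper's argument and is fine: every one of the $N$ edges of a closed tour joins two distinct lattice points and hence has length at least $\lambda(\Mlattice)$, so $\mathcal{L}(\Mlattice)\geq N\lambda(\Mlattice)$. The overall strategy for the upper bound (snake along the $k$ lines parallel to the shortest vector) is also the paper's, but the key geometric estimate you propose is false, and you correctly sense that it does not produce the stated error term. You claim each inter-line connection costs at most $d\leq\sqrt{2}\,\lambda(\Mlattice)$ because $x_2$ ``reaches to the neighboring line.'' It does, but $\|x_2\|$ is not bounded by a constant multiple of $\lambda$: the lattice $L((a,b),(N,0),(0,N))$ has covolume $N$, so the perpendicular spacing between neighboring lines in the direction of $x_1$ is $N/\lambda$, whence $\|x_2\|\geq N/\lambda$, which is of order $N$ when $\lambda$ is of order $1$. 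Concretely, in $L_{N,1,2}$ one has $\lambda=\sqrt{5}$ and $k=2$, and the two lines $y=2x$ and $y=2x-N$ are at distance $N/\sqrt{5}$; a single connecting segment already costs $\Theta(N)$, not $O(\lambda)$. So the total connecting cost is not $O(\lambda^2)$, and no per-edge bound in terms of $\lambda$ alone can work.

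The missing idea is to bound the spacing of neighboring lines by the \emph{total} perpendicular extent divided by the number of lines, which is what the paper does. The $k$ lines all lie in $[0,N-1]^2$, so two neighboring lines are at distance at most $N\sqrt{2}/k$, and one can pass from a point on one line to a point on the next at cost at most $N\sqrt{2}/k+\lambda$ (cross to the neighboring line and take at most one step along it). Summing over the $k$ connections gives a total connecting cost of $N\sqrt{2}+k\lambda$; the $k\lambda$ recombines with the $(N-k)\lambda$ contributed by the within-line segments to give exactly $N\lambda$, and the single closing edge costs at most $N\sqrt{2}$. Hence $\mathcal{L}(\Mlattice)\leq N\lambda+2\sqrt{2}\,N$, which after division by $N\sqrt{N}$ is precisely the claimed bound. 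The point is that the individual connecting segments may each be long; only their \emph{sum} is controlled (it telescopes across the square), which is why the bookkeeping you flag as the main obstacle cannot be resolved along the route you propose.
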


\begin{proof}
We begin with the upper bound and observe that we can build a valid path out of $N$ line segments by first connecting all points that lie on lines parallel to the shortest vector $v$ of $L((a,b),(N,0),(0,N))$ and then connect the resulting $k$ lines to obtain a closed path. From Lemma \ref{lem:help} we know that $\|v \|, k \in \mathcal{O}(\sqrt{N})$.
Furthermore, we observe that two lines can have at most a distance of $N\sqrt{2}/k$ such that the minimal distance $d$ of two points on these lines is upper bounded by $N\sqrt{2}/k + \|v\|$; see Figure \ref{fig:short} (right). 
This suffices to upper bound the length of the shortest path. Note that we divide our estimate by $N$ to scale $[0,N-1]^2$ to $[0,1]^2$. We obtain for $\lambda = \lambda(\Mlattice)$:
\begin{equation*}
\frac{\mathcal{L}(\Mlattice)}{N \sqrt{N}} \leq \frac{(N-k) \lambda + k (N\sqrt{2}/k+ \lambda ) + N \sqrt{2}}{N \sqrt{N}} = \frac{\lambda}{\sqrt{N} } + \frac{\sqrt{2} + \sqrt{2}}{\sqrt{N}}.
\end{equation*}
As for the lower bound, we simply assume that all $N$ line segments are of minimal length $\|v\|$ which implies that the shortest path has at least length $N \cdot \lambda(\Mlattice)$.
\end{proof}

\subsection{Long shortest vectors} \label{sec32}
Now we look at particular lattices. 
For given integers $N, b$ with $1<b<N$ and $\mathrm{gcd}(N,b)=1$, there are unique positive integers $x, y$ with $y<b$ and $N=b \cdot x + y$.
According to our definition, each $\MMlattice$, interpreted as point set, contains the triangle $ABC$ with $A=(0,0)$, $B=(1,b)$ and $C=(x+1,b-y)$; for an illustration see Figure \ref{fig:triangle}. Note that $AB=(1,b)$, $AC=(x+1, b-y)$ and $BC=(x,-y)$.

\begin{center}
\begin{figure}[h!]
\subfigure{
\begin{tikzpicture}[scale=0.4]
\draw[thick] (0,0)--(1,7)--(8,3)--(0,0);
\node at (0,0) {$\bullet$}; 
\node at (1,7) {$\bullet$}; 
\node at (8,3) {$\bullet$}; 

\node at (2.5,-0.5) {\small $A=(0,0)$}; 
\node at (3.5,7) {\small $B=(1,b)$}; 
\node at (12,3) {\small $C=(x+1,b-y)$}; 
\end{tikzpicture}
}
\subfigure{
\includegraphics[width=0.33\textwidth]{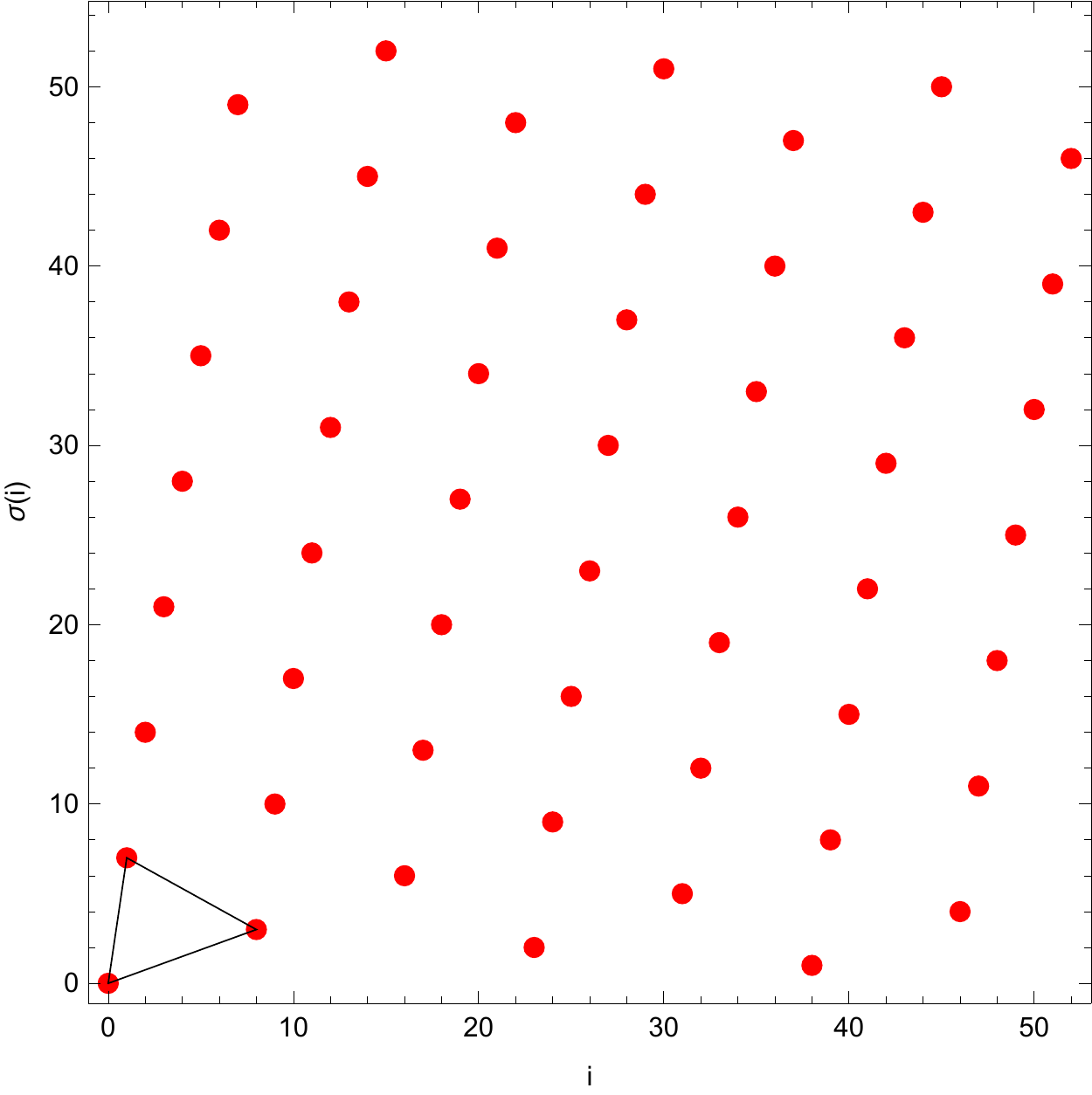}
}
\caption{\emph{Left:} The triangle $ABC$. \emph{Right:} $L_{53,1,7}$ and the corresponding triangle.} \label{fig:triangle}
\end{figure}
\end{center}

In the following, we first relate the shortest vector of $\MMlattice$ to the triangle $ABC$, before we use Lemma \ref{lem:example} to characterize a particular family of such modular lattices.

\begin{lem} \label{lem:example}
Let $\MMlattice$ be a modular lattice for an integer $N$ with $\mathrm{gcd}(N,b)=1$ and $1<b<N$.
If $AB$ is the shortest edge of the triangle $ABC$, then $(1,b)$ is a shortest vector of $\MMlattice$. In particular, we have
{\small
\begin{equation*}
\frac{\sqrt{1+b^2}}{\sqrt{N}} + \frac{b (\min\{ \| AC\|, \| BC\| \} -\| AB\| ) }{N \sqrt{N}} \leq  \frac{\mathcal{L}(\MMlattice)}{N \sqrt{N}} \leq \frac{\sqrt{1+b^2}}{\sqrt{N}} + \frac{b (\max \{ \| AC\|, \| BC\| \} - \| AB\|  )}{N \sqrt{N} } + \frac{\sqrt{2}}{\sqrt{N}}.
\end{equation*}
}
\end{lem}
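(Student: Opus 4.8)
The plan is to prove the statement in two stages: first to pin down the shortest vector, showing $\lambda(\MMlattice)=\|AB\|=\sqrt{1+b^2}$ whenever $AB$ is the shortest edge, and then to feed the extra geometric information carried by the triangle $ABC$ into the line-by-line tour estimates that already underlie Lemma \ref{lem1}.

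\emph{Shortest vector.} The points $A,B,C$ are the modular-lattice points with indices $0,1,x+1$, so $AB=(1,b)$, $BC=(x,-y)$ and $AC=(x+1,b-y)$ are genuine vectors of the associated lattice $L=L((1,b),(N,0),(0,N))$, and $\det\left(\begin{smallmatrix}1&x\\ b&-y\end{smallmatrix}\right)=-(bx+y)=-N$ shows that $\{AB,BC\}$ is a basis of $L$. Since $(1,b)$ is primitive, $L$ splits into lines parallel to $AB$, and every vector can be written $v=pAB+qBC$ with $p,q\in\ZZ$; for $q=0$ one trivially has $\|v\|\ge\|AB\|$. For $q\neq0$ I would study the convex function $\phi(p)=\|pAB+qBC\|^2=Pp^2+2qRp+q^2Q$, where $P=\|AB\|^2$, $Q=\|BC\|^2$ and $R=\langle AB,BC\rangle$; its minimum over $\RR$ equals $q^2(PQ-R^2)/P=q^2N^2/P$, because $PQ-R^2=(\det L)^2=N^2$. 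The hypothesis that $AB$ is the shortest edge says exactly that $\phi$ takes values $\ge P$ at the two adjacent-line lattice points $BC$ and $AC=AB+BC$, i.e.\ $\|BC\|^2,\|AC\|^2\ge\|AB\|^2$, and I would combine this with convexity (for the adjacent lines $q=\pm1$) and with the lower bound $q^2N^2/P$ (for the farther lines $|q|\ge2$) to conclude $\phi(p)\ge P$ for all integers $p$ and all $q\neq0$. This gives $\lambda(\MMlattice)=\|AB\|=\sqrt{1+b^2}$.

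\emph{Tour estimates.} Writing $P_n=(n,nb \bmod N)$, a short computation shows that $P_n$ lies on the line parallel to $AB$ indexed by $-\lfloor nb/N\rfloor$, and $\lfloor nb/N\rfloor$ runs over exactly $\{0,1,\dots,b-1\}$ as $n$ runs over $\{0,\dots,N-1\}$; hence the points split into exactly $k=b$ lines. On each line consecutive points differ by $AB$, giving $N-b$ intra-line edges of length $\|AB\|$, while adjacent lines are joined by the translates $BC+sAB$, whose two shortest representatives are $BC$ and $AC$. For the upper bound I would traverse the $b$ lines, join them with $b-1$ hops of length at most $\max\{\|AC\|,\|BC\|\}$, and close the tour with a single diagonal segment of length at most $\sqrt2\,N$; dividing by $N\sqrt N$ yields the right-hand inequality. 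For the lower bound I would note that all $N$ edges of any tour have length at least $\lambda(\MMlattice)=\|AB\|$ and that at least $b$ of them connect distinct lines (each line must be exited at least once), each such edge having length at least $\min\{\|AC\|,\|BC\|\}$; this gives $\mathcal{L}(\MMlattice)\ge(N-b)\|AB\|+b\min\{\|AC\|,\|BC\|\}$, which rearranges into the left-hand inequality.

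\emph{Main obstacle.} The delicate step is the shortest-vector claim. The pair $\{AB,BC\}$ need not be Minkowski-reduced: the vertex of $\phi$ may lie outside $[0,1]$ (equivalently $\langle AB,BC\rangle$ may be positive), and one may even have $N<1+b^2$, so the inequality $\phi(p)\ge P$ cannot be read off directly and must be extracted from the two hypotheses $\|BC\|^2,\|AC\|^2\ge\|AB\|^2$ together with the determinant identity $PQ-R^2=N^2$, splitting into cases according to whether the minimizing integer $p$ sits between the two marked lattice points $BC$ and $AC$ or not. The same vertex-location analysis is precisely what certifies that $BC$ and $AC$ are the extremal connectors between adjacent lines used in the tour estimates.
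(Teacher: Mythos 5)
Your two-stage architecture --- first certify that $AB=(1,b)$ realizes $\lambda(\MMlattice)$, then run line-by-line tour estimates --- is the same as the paper's, and the tour-estimate half is sound: your count of exactly $b$ lines, the upper-bound path with $b-1$ connectors of length at most $\max\{\|AC\|,\|BC\|\}$ plus one closing diagonal, and your lower bound via ``at least $b$ inter-line edges, each of length at least the second minimum'' (which is actually more explicit than the paper's one-line justification of the left-hand inequality). The genuine gap is exactly where you flag it: the shortest-vector claim is not proved. The hypotheses $\|BC\|,\|AC\|\ge\|AB\|$ say only that $\phi(0),\phi(1)\ge P$ on the line $q=1$; if the vertex $p^*=-R/P$ lies outside $[0,1]$, convexity gives no control at the integer nearest $p^*$, and for a general basis the two hypotheses are simply insufficient --- e.g.\ $u=(1,0)$, $v=(10,0.1)$ satisfies $\|u\|\le\|v\|$ and $\|u\|\le\|u+v\|$, yet $v-10u=(0,0.1)$ is far shorter than $u$. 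So the missing case can only be closed by exploiting the specific arithmetic ($N=bx+y$, $0<y<b$, integrality), which your sketch announces but does not carry out. Your fallback for $|q|\ge2$, namely $\phi\ge q^2N^2/P\ge P$, likewise requires $1+b^2\le 2N$, which is not given a priori (it does follow from the hypothesis, but only after a further argument you have not supplied). Since the second-minimum identification $\lambda_2=\min\{\|AC\|,\|BC\|\}$ used in your lower bound rests on the same reduced-basis fact, the gap propagates there as well.

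The paper closes this step by a different device: it takes Rote's explicit basis $b_1=(1,(1-N)b)$, $b_2=(0,N)$ of $L((1,b),(N,0),(0,N))$, runs Gauss's reduction (replacing $b_1$ by $(1,b)=b_1+bb_2$, then $b_2$ by $-BC$ or $-AC$), observes that under the hypothesis no further reduction step applies, and quotes Rote's Lemma~2 to conclude that $AB$ is the first minimum and $\min\{\|AC\|,\|BC\|\}$ the second. If you want to keep your direct quadratic-form route, you must actually perform the vertex-location analysis and show that the hypothesis forces the minimizing integer on the line $q=\pm1$ into $\{0,1\}$ (equivalently, that $\{AB,BC\}$ or $\{AB,AC\}$ is reduced); as written, the central assertion of the lemma rests on a step you yourself describe as not yet extracted.
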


\begin{proof}
We start with the lattice $L=((1,b),(N,0),(0,N))$ and use the method outlined in \cite[Section 3]{Rot97} to compute a basis of this lattice. In particular, we obtain
\begin{equation}
b_1=(1, (1-N)b ) \ \ \ \text{ and } \ \ \ b_2=(0,N).
\end{equation}
In a second step, we explicitly reduce this basis using the algorithm of Gauss outlined in \cite{Eis01}.
That is, we first replace $b_1$ by $(1,b)$ since $(1,b)=b_1+ b b_2$ which is clearly shorter than $b_1$. In a second reduction step we reduce $b_2$ either to $(-x,y)=-BC$ or $(-x-1,y-b)=-AC$ depending on which vector is shorter. By the assumption that $AB$ is shortest in the triangle $ABC$ it follows that both vectors are longer than $(1,b)=AB$. Moreover, it is easy to see that there is no further reduction. Hence, the reduced basis of the lattice is either $AB$ and $AC$ or $AB$ and $BC$. If follows from \cite[Lemma 2]{Rot97} that $\min(AC,BC)$ is shortest among all vectors in the lattice that are not parallel to $AB$.\\
Knowing that $(1,b)$ is a shortest vector, it is now easy to get bounds for the length of the shortest path through all points:
Since $N=b \cdot x+ y$ we have $b$ lines each containing either $x$ or $x+1$ points from which we obtain the stated lower bound.\\
For the upper bound it suffices to explicitly construct a valid path. We achieve this by first connecting all the points on the $b$ parallel lines. Next, we connect neighboring lines and observe that the required $(b-1)$ line segments  each have a length of at most $\max(AC,BC)$. Finally, we close the path by adding a line segment of length at most $N\sqrt{2}$. 
\end{proof}

Thus, it follows that if we choose $b$ to be roughly of size $\sqrt{N}$ and such that $AB$ is the shortest edge of $ABC$, then we obtain a point set with long TSP. The next lemma makes this observation precise and characterizes one such family of lattices.

\begin{lem}
For an integer $N$ set $b=\lfloor \sqrt{N} \rfloor -1$. Then the edge $AB$ is the shortest edge of $ABC$ in $\MMlattice$.
\end{lem}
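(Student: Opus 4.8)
The claim is that for $b=\lfloor\sqrt N\rfloor-1$, the edge $AB=(1,b)$ is the shortest of the three edges of the triangle $ABC$ in $\MMlattice$. Recall $N=bx+y$ with $0\le y<b$, so $AC=(x+1,b-y)$ and $BC=(x,-y)$. The plan is to show directly that $\|AB\|^2\le\|BC\|^2$ and $\|AB\|^2\le\|AC\|^2$, exploiting that $b\approx\sqrt N$ forces $x\approx\sqrt N$ as well, so that both competing edges are at least as long.

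**Controlling $x$.** First I would pin down the size of $x=\lfloor N/b\rfloor$. Writing $b=\lfloor\sqrt N\rfloor-1$, for $N$ large enough one has $b<\sqrt N$, hence $x=\lfloor N/b\rfloor\ge\lfloor\sqrt N\rfloor\ge b+1>b$. This inequality $x>b$ is the workhorse: it says the ``horizontal'' extent $x$ of the triangle already exceeds the vertical slope $b$. I expect $x\ge b+1$ to hold for all $N$ past a small threshold, with the finitely many small $N$ checked by hand (the lemma is really an asymptotic statement dressed as a clean identity, so a remark about small $N$ may be needed).

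**The two comparisons.** For $BC=(x,-y)$ versus $AB=(1,b)$, I would compare squared norms:
\begin{equation*}
\|BC\|^2-\|AB\|^2 = (x^2+y^2)-(1+b^2)=(x^2-1)-(b^2-y^2).
\end{equation*}
Since $x>b\ge y$ we have $x^2-1\ge b^2-1\ge b^2-y^2$, giving $\|BC\|\ge\|AB\|$. For $AC=(x+1,b-y)$ versus $AB$, note $(x+1)^2\ge x^2>b^2\ge(b-y)^2$ because $0\le b-y\le b$, so the first coordinate alone dominates $b^2$; more carefully,
\begin{equation*}
\|AC\|^2-\|AB\|^2=(x+1)^2+(b-y)^2-1-b^2,
\end{equation*}
and using $(x+1)^2-1=x^2+2x\ge b^2+2b$ together with $(b-y)^2\ge 0$ shows the difference is positive. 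Thus $AB$ is the shortest edge.

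**Main obstacle.** The only delicate point is the boundary behavior of the floor function: verifying $x=\lfloor N/b\rfloor>b$ uniformly requires knowing how $\lfloor\sqrt N\rfloor$ and $N/(\lfloor\sqrt N\rfloor-1)$ interact, and near perfect squares or just below them the estimate $b<\sqrt N$ and the resulting lower bound on $x$ must be argued with care. I would handle this by writing $\lfloor\sqrt N\rfloor=m$, so $m^2\le N<(m+1)^2$ and $b=m-1$, then showing $N/b\ge N/(m-1)\ge m^2/(m-1)>m-1+1=m>b$ for $m\ge 2$, which secures $x\ge\lfloor N/b\rfloor\ge m>b$. Once $x>b$ is established, both squared-norm comparisons above are immediate, so the floor estimate is genuinely the crux of the argument.
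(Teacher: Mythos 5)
Your proof is correct and follows essentially the same route as the paper: both arguments hinge on lower-bounding $x$ via the floor function (you show $x\ge m>b$ from $N/b\ge m^2/(m-1)>m$ with $m=\lfloor\sqrt N\rfloor$, the paper shows $x\ge\lceil\sqrt N\rceil$ from $b\cdot\lceil\sqrt N\rceil\le N$) and then comparing the three edge lengths. The only cosmetic difference is that you compare squared norms pairwise using $x>b\ge y$, while the paper bounds $\|AB\|\le\sqrt N$ and $\|BC\|,\|AC\|>\sqrt{N+1}$ directly; your floor-function step is, if anything, slightly cleaner since it avoids the fractional-part bookkeeping.
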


\begin{proof}
Recall that there are unique integers $x,y$ with $b>y$ such that $N=b\cdot x+y$.
Setting $b=\lfloor \sqrt{N} \rfloor -1$ implies that $x\geq \lceil \sqrt{N} \rceil$. To see this set $\lfloor \sqrt{N} \rfloor = \sqrt{N}-z$ for a $z \in [0,1)$. Then
\begin{equation*}
b\cdot x = (\lfloor \sqrt{N} \rfloor -1)\lceil \sqrt{N} \rceil=(\sqrt{N}-z-1)(\sqrt{N}+(1-z) ) = N - 2z \sqrt{N} -1 +z^2 \leq N.
\end{equation*}
Hence,
\begin{align*}
\|AB\| &= \sqrt{1+b^2} \leq \sqrt{(\sqrt{N}-1)^2 + 1} \leq \sqrt{N},\\
\|BC\| &= \sqrt{x^2 + y^2} \geq \sqrt{x^2 +1} > \sqrt{N +1},\\
\|AC\| &= \sqrt{(x+1)^2 + (b-y)^2} \geq \sqrt{(x+1)^2} > \sqrt{N +1}.
\end{align*}
\end{proof}
Thus, we get
\begin{equation*}
\underset{N\rightarrow \infty}\lim \frac{ \mathcal{L}(L_{N,1,\lfloor \sqrt{N} \rfloor -1}) }{N\sqrt{N}} = \underset{N\rightarrow \infty}\lim \frac{\sqrt{(\lfloor \sqrt{N} \rfloor-1)^2 + 1 } }{\sqrt{N}} = 1.
\end{equation*}

\begin{ex}
If $N=479$, then $b=20$ and $479=20 \cdot 23+19$ with $AB=(1,20)$, $AC=(24,1)$ and $BC=(23,-19)$. Thus, $AB$ is indeed the shortest edge of $ABC$ and by Lemma \ref{lem:example} a shortest vector of $L_{479,1,20}$. Plugging the according values into our formula, we obtain:
$$ 0.9225\ldots \leq \frac{\mathcal{L}(L_{479,1,20})}{\sqrt{479}} \leq 0.9982\ldots $$
\end{ex}

Interestingly, we can do even better for infinitely many integers $N$ as the following result shows.
\begin{lem}
For infinitely many integers $N$ there exists a pair $(a,b)$ such that $f(N,a,b)>1$.
\end{lem}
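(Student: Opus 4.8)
The plan is to use the equivalence $f(N,a,b)>1 \iff \lambda(\Mlattice)^2 > N$, together with the fact that the lattice $L((a,b),(N,0),(0,N))$ has determinant exactly $N$. The classical value $2/\sqrt3$ of Hermite's constant in the plane, attained by the hexagonal lattice, shows that a planar lattice of determinant $N$ can have shortest vector as long as $\sqrt{(2/\sqrt3)N}\approx 1.0746\sqrt N$, so the target $\sqrt N$ sits strictly inside the admissible range; it therefore suffices to exhibit an infinite family of modular lattices that are close enough to hexagonal. Guided by the numerical example $f(209,1,56)$, whose two shortest vectors are $(4,15)$ and $(-11,11)$, I would construct such a family explicitly.

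For a parameter $t$ I would set $s=\lfloor t(\sqrt3-1)/2\rfloor$ and take the lattice $L_t\subset\ZZ^2$ generated by $v_1=(s,\,s+t)$ and $v_2=(-t,\,t)$, of determinant $N=N(t)=\det(v_1,v_2)=t(2s+t)$. The first key step is to pin down the shortest vector by Gauss reduction: since $v_1\cdot v_2=t^2$ and $v_1-v_2=(s+t,\,s)$ is the reflection of $v_1$ and hence of equal length, a short case split on whether $\|v_1\|\le\|v_2\|$ shows that $(v_1,v_2)$ or $(v_1,\,v_2-v_1)$ is reduced and that $\lambda(L_t)^2=\min(\|v_1\|^2,\|v_2\|^2)=\min(2s^2+2st+t^2,\,2t^2)$. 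The second key step is the decisive inequality: one computes $\|v_1\|^2-N=2s^2>0$ and $\|v_2\|^2-N=t(t-2s)>0$, the latter because $2s\le t(\sqrt3-1)<t$. Hence $\lambda(L_t)^2>N$, i.e. $f>1$, and both are strict inequalities rather than mere asymptotics.

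It then remains to certify that $L_t$ is actually a modular lattice, and here I would take $t=p$ to be an odd prime. The quotient $\ZZ^2/L_p$ has Smith normal form whose first elementary divisor is $\gcd(s,\,s+t,\,t)=\gcd(s,p)$, which equals $1$ since $0<s<p$ forces $p\nmid s$; thus $\ZZ^2/L_p\cong\ZZ/N$ is cyclic. Consequently $N\ZZ^2\subseteq L_p$, so $(N,0),(0,N)\in L_p$, and lifting a generator $(a,b)$ of the quotient gives $L_p=L((a,b),(N,0),(0,N))$; because the image of $(a,b)$ has order $N$ in $\ZZ^2/L_p$ one deduces $\gcd(N,a,b)=1$, so $L_p=\Mlattice$ in the required sense. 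Finally $N(p)=p(p+2s)\sim\sqrt3\,p^2\to\infty$ is increasing in $p$, so distinct primes yield distinct arbitrarily large $N$, which proves the claim. As a sanity check, $p=11$ gives $s=4$, $N=209$ and $v_1=(4,15)$, $v_2=(-11,11)$, exactly the lattice underlying $f(209,1,56)$.

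The step I expect to be the main obstacle is the shortest-vector computation, since the configuration lies right at the hexagonal boundary where $v_1$, $v_2$ and $v_1-v_2$ have almost equal lengths: the Gauss-reduction case analysis and the control of the rounding in $s=\lfloor t(\sqrt3-1)/2\rfloor$ — needed to keep both $s\ge1$ and $t>2s$ strict for all large $p$ — must be carried out with care. By contrast, the cyclicity of the quotient and the passage to the modular form are comparatively routine.
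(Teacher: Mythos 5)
Your argument is correct in substance but takes a genuinely different — and in fact stronger — route than the paper. The paper sets $a=1$, $b=\lceil\sqrt N\rceil$, and uses its triangle lemma (Gauss reduction of the basis $(1,b),(x,-y)$) to show that $(1,b)$ is a shortest vector whenever $\{\sqrt N\}$ is sufficiently close to $1$; equidistribution of $(\sqrt N)_{N\ge1}$ mod $1$ then supplies infinitely many such $N$, giving $f(N,1,b)^2=(1+b^2)/N>1$ but with $f\to1$. You instead build near-hexagonal sublattices of $\ZZ^2$ with cyclic quotient and read off $\lambda^2=N+2s^2$ directly. I checked your reduction step: with $u=2s/t\le\sqrt3-1$ one always has $2s^2+2st<t^2$, so $(v_1,v_2)$ is never reduced, but $v_2-v_1=(-t-s,-s)$ has $\|v_2-v_1\|=\|v_1\|$ and $(v_1,v_2-v_1)$ is reduced precisely because $2(2s^2+2st)\le 2s^2+2st+t^2$; hence $\lambda(L_t)^2=2s^2+2st+t^2=N+2s^2>N$ as claimed, and the case $p=11$ does reproduce $L_{209,1,56}$. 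What your construction buys is substantial: $f^2=1+2s^2/N\to 2/\sqrt3$, the Hermite bound for planar lattices, so you not only prove the lemma but answer the paper's Question~1.5 affirmatively (an absolute $\varepsilon>0$ with $f_{\max}(N)>1+\varepsilon$ infinitely often) and show $\sup f\ge\sqrt{2/\sqrt3}\approx1.0746$, which the paper's method cannot reach since its $f$ tends to $1$ from above.

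One slip to repair in the write-up: you conflate the two quotients $\ZZ^2/L_p$ and $L_p/N\ZZ^2$. A lift of a generator of $\ZZ^2/L_p$ does not lie in $L_p$ and cannot serve as the $(a,b)$ generating $\MlatticeP{N}{a}{b}$. What you need is that $L_p/N\ZZ^2$ is cyclic of order $N$; this does follow from your Smith normal form computation (choose a basis $e_1,e_2$ of $\ZZ^2$ with $L_p=\ZZ e_1+\ZZ Ne_2$, so $L_p/N\ZZ^2$ is generated by the class of $e_1$), and then $(a,b)=e_1$ has order $N$ in $\ZZ^2/N\ZZ^2$, which gives $\gcd(N,a,b)=1$ and $L_p=L((a,b),(N,0),(0,N))$. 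With that correction the proof is complete.
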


\begin{proof}
For a given $N$ we set $a=1$ and $b=\lceil \sqrt{N} \rceil = \sqrt{N}+(1-\{ \sqrt{N} \})$ and recall from Lemma \ref{lem:example} that if $AB=(1,b)$ is a shortest edge of $ABC$, then $AB$ is a shortest vector of $L_{N,1,b}$.
Again, there are unique positive integers $x,y$ with $x,y < b$ such that $N=bx+y$. In particular we have for $1/2 > z^*(N):=1/2 (1 + 2 \sqrt{N} - \sqrt{1 + 4 N})$ that
$$ x = \begin{cases} 
b-1 &\mbox{if } \{ \sqrt{N} \} \geq z^*(N)\\ 
b-2 & \mbox{otherwise.}
\end{cases} 
$$
Since the infinite sequence $(\sqrt{N})_{N\geq 1}$ is uniformly distributed modulo 1 (see \cite[Chapter 1]{KN}), we find infinitely many integers satisfying the first case with the even stronger condition $\{ \sqrt{N} \}>1/2$.  \\
We choose such an integer.
Hence, $x=b-1$ such that $AB=(1,b)$, $AC=(x+1,b-y)=(b,b-y)$ and $BC=(x,-y)=(b-1,-y)$. Since $b>y$ we trivially have $\|AB\| \leq \|AC\|$. To obtain $\|AB\| < \|BC\|$ as well, we need that $b^2 +1 < (b-1)^2 + y^2$ or, simplified, that
\begin{equation}\label{cond1}
2b < y^2.
\end{equation}
Set $z'=1-\{ \sqrt{N} \} \in [0,1]$, then by assumption $z'<1/2$ and 
$$b=\sqrt{N}+z', \ \ \  y=N-bx = N-(\sqrt{N}+z')(\sqrt{N}+z'-1).$$ 
Define 
$$g(N,z):= (N-(\sqrt{N}+z)(\sqrt{N}+z-1))^2 - 2 (\sqrt{N}+z),$$ 
which is a polynomial of degree $4$ in $z$. We observe that for $N>7$, $g$ has two real roots and $g(N,0), g(N,1) > 0$ and $g(N,0.5) < 0$. This implies that there is a root $\rho(N) \in (0,1/2)$. Asymptotically we find 
\begin{equation}\label{limit1}
\lim_{N\rightarrow \infty} \rho(N) = 1/2.
\end{equation} 
Hence, if we pick $N$ such that $z' \in (0,\rho(N))$, then $g(N,z')>0$ and therefore \eqref{cond1} holds. Moreover, from \eqref{limit1} it is clear that there is an index $N_0$ such that for given $\varepsilon \in (1/2,1)$ and $N>N_0$, $\rho(N)>1- \varepsilon$. Thus, for all $N>N_0$ with $\{ \sqrt{N} \} \in (\varepsilon, 1)$ the inequality in \eqref{cond1} holds. Again, since the sequence $(\sqrt{N})_{N\geq 1}$ is uniformly distributed modulo 1, there are infinitely many such $N$.
\end{proof}

\begin{rem}
For $\varepsilon=3/4$ we get $N_0=87$. Thus for all $N>87$ with $\{ \sqrt{N} \} \in (3/4,1)$ the inequality in \eqref{cond1} holds.
\end{rem}

\section{Results for 2-dimensional Kronecker sequences}
\label{sec4}

\subsection{Relation to modular lattices} \label{sec41}
Let $\alpha, \beta \in (0,1)$ be irrationales such that $1,\alpha, \beta$ are linearly independent over $\QQ$. As we have seen in Section \ref{sec2} we can use the Jacobi-Perron (or one of the related algorithms) to approximate almost all pairs $(\alpha, \beta)$ by triples $(q_i, p_i, r_i)$ such that
\begin{equation} \label{assump}
\left | \alpha - \frac{p_i}{q_i} \right | < \frac{1}{q_i^{1+\delta}} \text{ and } \left | \beta - \frac{r_i}{q_i} \right | < \frac{1}{q_i^{1+\delta}},
\end{equation}
for a constant $\delta>0$ and all indices $i>n_0$.
Combining this result with our results of the previous section, we can approximate $\KsetQ$ with the lattice $\MlatticeQ$. The following lemma makes the relation between the corresponding shortest paths precise.

\begin{lem}
Let $(p_i/q_i, r_i/q_i)$ be the $i$-th convergent of $(\alpha,\beta)$ such that \eqref{assump} holds. If $N_i=q_i$ and $\lambda=\lambda(\MlatticeQ)$ is the shortest distance in $\MlatticeQ$ then
\begin{align*}
\frac {\lambda}{\sqrt{N_i}} \left( 1- \frac{5}{N_i^{\delta}} \right) \leq
\frac{\mathcal{L}( \KsetQ )}{\sqrt{N_i}} \leq
\frac{ \lambda}{\sqrt{N_i}} \left( 1+ \frac{3}{N_i^{\delta} } \right) + \frac{2 \sqrt{2}}{\sqrt{N_i}}, 
\end{align*}
for a constants $\delta=\delta(\alpha,\beta)>0$.
\end{lem}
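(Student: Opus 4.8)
Write $N=N_i=q_i$, $a=p_i$, $b=r_i$ and $\lambda=\lambda(\MlatticeQ)$, and let $L'=\tfrac1N\MlatticeQ\subset[0,1]^2$ be the rescaled modular lattice, so that $\mathcal{L}(L')=\mathcal{L}(\MlatticeQ)/N$ and Lemma \ref{lem1} reads $\lambda/\sqrt N\le \mathcal{L}(L')/\sqrt N\le \lambda/\sqrt N+2\sqrt2/\sqrt N$. The plan is to transport these tour-length estimates from $L'$ to $\KsetQ$, the two sets being close because $(\alpha,\beta)$ is well approximated by $(a/N,b/N)$. Index both sets by $n=0,\dots,N-1$ through $K_n=(n\alpha\bmod1,\,n\beta\bmod1)$ and $L'_n=\tfrac1N(na\bmod N,\,nb\bmod N)$. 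The basic input from \eqref{assump}, using $|n-m|<N$, is
\begin{equation*}
\bigl\|(n-m)(\alpha-a/N,\ \beta-b/N)\bigr\|<|n-m|\,\frac{\sqrt2}{N^{1+\delta}}\le\frac{\sqrt2}{N^{\delta}},
\end{equation*}
so that the translates $K_n-K_m$ and $L'_n-L'_m$ agree modulo $\ZZ^2$ up to $|n-m|\sqrt2/N^{1+\delta}$. In particular a single step by the shortest vector $v\equiv j_0(a,b)\pmod N$, of index $j_0$, has length $\lambda/N$ in $L'$ and length at most $\lambda/N+j_0\sqrt2/N^{1+\delta}$ in $\KsetQ$.

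For the upper bound I would reproduce the construction of Lemma \ref{lem1} on $\KsetQ$: traverse the points along the (at most $k\le2\lambda$) near-parallel segments in the direction of $v$, then join consecutive segments and close the tour. Along a segment consecutive points of $\KsetQ$ differ by the fixed vector $j_0(\alpha,\beta)\bmod1$, so each of the $N-k$ within-segment edges has length at most $\lambda/N+j_0\sqrt2/N^{1+\delta}$, while the $k$ joining edges and the closing edge are bounded exactly as in Lemma \ref{lem1} and contribute the additive $2\sqrt2$. Summing,
\begin{equation*}
\mathcal{L}(\KsetQ)\le (N-k)\Bigl(\tfrac{\lambda}{N}+\tfrac{j_0\sqrt2}{N^{1+\delta}}\Bigr)+2\sqrt2\le \lambda+\frac{j_0\sqrt2}{N^{\delta}}+2\sqrt2,
\end{equation*}
and dividing by $\sqrt N$ yields the right-hand inequality provided $j_0\le \tfrac{3}{\sqrt2}\lambda$.

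For the lower bound I would use that every tour has $N$ edges, whence $\mathcal{L}(\KsetQ)\ge N\mu$, where $\mu$ is the minimal Euclidean (hence at least mod-$1$) distance in $\KsetQ$. Writing any such distance as $\|t(\alpha,\beta)\bmod1\|$ and comparing with the corresponding lattice distance, which is $\ge\lambda/N$, the displacement estimate gives $\mu\ge \lambda/N-|t|\sqrt2/N^{1+\delta}$. As soon as the index $t$ realizing $\mu$ has size $O(\lambda)$, this produces $\mu\ge\tfrac{\lambda}{N}(1-5/N^\delta)$ and hence $\mathcal{L}(\KsetQ)\ge\lambda(1-5/N^\delta)$, which after division by $\sqrt N$ is the left-hand inequality.

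The main obstacle is the structural fact invoked in both halves: that the shortest vector of $\MlatticeQ$, and more generally every lattice vector of length comparable to $\lambda$, has small index $j_0=O(\lambda)=O(\sqrt N)$. This is precisely what upgrades the naive transfer error of order $N^{1-\delta}$ (one factor $\sqrt2/N^\delta$ per point) to the required order $\lambda/N^{\delta}=O(N^{1/2-\delta})$, the saved factor $\sqrt N$ being exactly $j_0/N$. I would try to extract it from the geometry behind Lemma \ref{lem:help}: the $N$ points split into $k\le2\lambda$ segments parallel to $v$ carrying $\approx N/\lambda$ points each, so a full segment consists of $\approx N/\lambda$ successive index advances of size $j_0$, and the requirement that these remain distinct residues within a single period of length $N$ should force $j_0\asymp N/(N/\lambda)=\Theta(\lambda)$; the statement for all short vectors then follows by expressing them in the reduced basis $(v,w)$, whose norms are $\asymp\sqrt N$, and bounding their indices by $O(\|\cdot\|)$. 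Making this counting rigorous—pinning down how the orbit $n\mapsto n(a,b)\bmod N$ distributes over the segments, and, for the lower bound, excluding spurious short returns in $\KsetQ$ at large index—is the crux; the remaining estimates are the bookkeeping displayed above together with the rate \eqref{assump}.
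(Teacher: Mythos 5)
There is a genuine gap, and it sits exactly where you locate it yourself: the structural claim that the index $j_0$ of the shortest vector satisfies $j_0=O(\lambda)=O(\sqrt N)$ is not only unproven but false, so the ``crux'' cannot be filled in. The index of a lattice point is not controlled by its Euclidean length (replacing $(a,b)$ by $(ca,cb) \pmod N$ for a unit $c$ leaves the point set unchanged but multiplies all indices by $c^{-1}$), and the paper's own running example already defeats the claim: for $(q_4,p_4,r_4)=(484,241,112)$ the shortest vector has length $\lambda=\sqrt{185}\approx 13.6$, yet $241^{-1}\equiv 241 \pmod{484}$ gives $229\cdot(241,112)\equiv(13,-4)\pmod{484}$, so $j_0=229\approx N/2$. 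With $j_0$ of order $N$ your per-edge error $j_0\sqrt2/N^{1+\delta}$, summed over $N$ edges, is of order $N^{1-\delta}$, i.e.\ of order $N^{1/2-\delta}$ after dividing by $\sqrt N$, which does not tend to zero since $\delta\le 1/2$; your proviso $j_0\le\tfrac{3}{\sqrt2}\lambda$ simply fails. The lower bound has the same problem: for a pair of points of $\KsetQ$ with index difference $t$ of order $N$, the displacement $|t|\,\|d\|$ can be as large as $\sqrt2/N^{\delta}$, which dwarfs $\lambda/N=\Theta(N^{-1/2})$, so \eqref{assump} alone cannot give $\mu\ge\tfrac{\lambda}{N}(1-5/N^{\delta})$, and ``excluding spurious short returns at large index'' is not a technicality but the whole difficulty.

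The idea you are missing is that the transfer error should be aggregated per \emph{line}, not per edge. The paper writes the $j$-th Kronecker edge on a line $l_h$ as $w_i+(n_{j+1}-n_j)d$ with $d=(\alpha-p_i/q_i,\beta-r_i/q_i)$ and applies the triangle inequality to the sum of these vectors \emph{before} taking norms; the index increments then telescope, so an entire line contributes an error of only $|n_{J_h}-n_0|\,\|d\|\le q_i\|d\|<\sqrt2/q_i^{\delta}$, no matter how large the individual increments $n_{j+1}-n_j$ are. Summing over the $k\le 2\lambda=O(\sqrt{q_i})$ lines gives a total error $O(\lambda/q_i^{\delta})$ --- precisely the factor $\sqrt N$ you were trying to save via the (false) bound on $j_0$. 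The reverse aggregation gives the upper bound, after which the $k$ connecting segments and the closing segment are handled exactly as in Lemma \ref{lem1}; that part of your bookkeeping is fine. So the correct repair is not to control indices of short vectors but to replace your edge-by-edge comparison with the telescoped line-by-line comparison.
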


\begin{proof}
To prove the lemma, we approximate a path through all points of $\KsetQ$ by estimating its difference to the path in $\MlatticeQ$ which we build out of the shortest vector $v_i$ of $L((p_i,r_i),(q_i,0),(0,q_i))$.
First, we normalize the points in $\MlatticeQ$ by multiplying with $1/q_i$ to obtain a point set in $[0,1]^2$ and we write $w_i:=v_i/q_i$ for the corresponding shortest vector.
Given two points $x_0=n_0(p_i/q_i, r_i/q_i), x_1=n_1(p_i/q_i, r_i/q_i) \in \MlatticeQ$ such that $x_1=x_0+w_i$, we observe that there are related points $x^*_0=n_0(\alpha,\beta), x^*_1=n_1(\alpha,\beta) \in \KsetQ$ with $x^*_1=x^*_0+w_{0,1}^*$. Setting $d=(\alpha - p_i/q_i, \beta - r_i/q_i)$, we can write 
\begin{align} \label{estimate}
w_{0,1}^*=(x_1+n_1 d ) - (x_0+n_0 d ) =x_1-x_0 + (n_1-n_0) d  = w_i + (n_1-n_0) d ;
\end{align}
see Figure \ref{fig:bound} for an illustration.
Consequently,
\begin{equation*} 
\|w_{0,1}^*\| = \| w_i + (n_1-n_0) d \|.
\end{equation*}
Recall that we have $k \in \mathcal{O}(\sqrt{q_i})$ parallel lines $l_1\ldots,l_h, \ldots l_k$ such that we can write $x_j^{h}=x_0^{h}+j v_i$, with $0\leq j \leq J$, for the points on a particular line.
Fixing a line $l_h$, we can now bound the sum of the lengths of the line segments through the points of $\KsetQ$ with the same indices. 
To get a lower bound, we use the triangle inequality:
\begin{align*}
\sum_{j=0}^{J_h-1} \| w_{j,j+1}^*\| &= \sum_{j=0}^{J_h-1} \| w_i + (n_{j+1}-n_j) d \| \\
&\geq \|  \sum_{j=0}^{J_h-1} w_i + (n_{j+1}-n_j) d \| = \| J_h w_i + (n_{J_h} - n_0) d \| \\
&\geq \| J_h w_i \| - | n_{J_h}-n_0 | \|d\|.
\end{align*}
Since we have $k$ such lines we lower bound the length of a path through the points of $\KsetQ$ as follows:
\begin{align*}
\mathcal{L}(\KsetQ) & \geq \sum_{h=1}^k \left(  \sum_{j=0}^{J_h-1} \| w_{j,j+1}^*\| \right) \geq \sum_{h=1}^k \left( \| J_h w_i \| - | n_{J_h}-n_0 | \|d\| \right) \\
&\geq (q_i - k) \|w_i\| - k q_i \|d\| = q_i \|w_i\| - k( \| w_i\| + q_i \|d\|)
\end{align*}
Dividing by $\sqrt{q_i}$ and setting $\|w_i\|=\|v_i\|/q_i $, we get
\begin{align*}
\frac{\mathcal{L}(\KsetQ)}{\sqrt{q_i}} &\geq \frac{q_i \|w_i\| - k( \| w_i\| + q_i \|d\|)}{\sqrt{q_i}} = \frac{\|v_i\|}{\sqrt{q_i}} - \frac{k}{\sqrt{q_i}} \left( \frac{\|v_i\|}{q_i}  + q_i \|d\| \right) \\
& > \frac{\|v_i\|}{\sqrt{q_i}} \left(  1 - 2 \left(  \frac{3 }{2 \sqrt{q_i}} + \frac{\sqrt{2}}{q_i^{\delta}} \right) \right),
\end{align*}
since $\| v_i\| \leq 3/2 \sqrt{q_i}$ and $q_i \|d\| = \| (q_i \alpha - p_i, q_i \beta - r_i) \| < \sqrt{2}/q_i^{\delta}$.

Rewriting \eqref{estimate} as $w_i = w_{0,1}^* + (n_1-n_0) (-1)d$, we can use the same arguments as for the lower bound to obtain
\begin{equation*}
J_h \|w_i\| + | n_{J_h}-n_0 | \|d\| \geq  \sum_{j=0}^{J_h-1} \| w_{j,j+1}^*\|.
\end{equation*}
Connecting all the $k$ lines yields
\begin{align*}
\mathcal{L}(\KsetQ) &\leq (q_i-k) \|w_i\| + k q_i \|d\| + k (\|w_i\| +\sqrt{2}/k) + \sqrt{2}\\
&= q_i \|w_i\| + k q_i \|d\| + \sqrt{2} + \sqrt{2},
\end{align*}
hence,
\begin{align*}
\frac{\mathcal{L}(\KsetQ)}{\sqrt{q_i}} &\leq \frac{ \|v_i\|}{\sqrt{q_i}} \left(1+ \frac{2 \sqrt{2} }{q_i^{\delta} } \right)+ \frac{2 \sqrt{2}}{\sqrt{q_i}},
\end{align*}
\end{proof}

\begin{center}
\begin{figure}[h!]
\begin{tikzpicture}[scale=0.7]

\node at (0,0) {$\bullet$}; 
\node at (2,2) {$\bullet$}; 
\node at (4,4) {$\bullet$}; 

\node at (0.5,-0.5) {$\bullet$}; 
\node at (2.6, 1.1) {$\bullet$}; 
\node at (4.7, 2.7) {$\bullet$}; 

\draw[thick] (0,0)--(2,2);
\draw[dashed] (2,2)--(4,4);

\draw[thick] (0.5,-0.5) -- (2.6,1.1);
\draw[dashed] (2.6,1.1) -- (4.7,2.7);
\draw (0,0) -- (0.5,-0.5);
\draw (2,2) -- (2.6,1.1);
\draw[dashed] (4,4) -- (4.7,2.7);

\node at (-0.7,0) {\small $x_0^h$}; 
\node at (1.3,2) {\small $x_1^h$}; 
\node at (3.3,4) {\small $x_2^h$}; 
\node at (1.2,-0.5) {\small $x_0^*$};
\node at (3.4,1.1) {\small $x_1^*$};
\node at (5.5,2.7) {\small $x_2^*$};

\node at (0.5,1.2) {\small $w_i$}; 
\node at (2.3, 0.3) {\small $w_{0,1}^*$}; 
\end{tikzpicture}
\caption{Illustration of our estimate of the difference in the length of the line segments in $\KsetQ$ and $\MlatticeQ$.} \label{fig:bound}
\end{figure}
\end{center}

\subsection{An example} \label{example2}
To illustrate our results we set $\alpha=\sqrt[3]{91} \pmod{1} = 0.4979\ldots$ and $\beta=\sqrt[3]{91^2} \pmod{1}=0.2314\ldots$. This choice is motivated by the work of Raju \cite{Raj76} who determined a family of irrationals such that the pair $(\omega, \omega^2)$ has a periodic Jacobi-Perron algorithm for every $\omega$ from this family. Going back to the seminal work of Perron \cite{Per07} it is known that pairs of irrationals with periodic Jacobi-Perron algorithm can be approximated such that $\delta=1/2$ in Theorem \ref{thm:delta}. \\

We see from our results that the gap between our lower and upper bounds is $\mathcal{O}(1/\sqrt{q_i})$. Therefore, we simply approximate the constants of $K_{\alpha,\beta}(q_i)$ resp. $L_{q_i,p_i,r_i}$ in the following by $f(q_i,p_i,r_i)$.
From the Jacobi-Perron algorithm we obtain the triples $(q_3,p_3,r_3)=(241,120,56)$, $(q_4,p_4,r_4)=(484,241,112)$ and $(q_5,p_5,r_5)=(972,484,225)$.
Using the standard basis reduction algorithm we obtain the lengths of the shortest vectors $\|v_3\|=\|(13,10)\|$, $\|v_4\|=\|(13,4)\|$ and $\|v_5\|=\|(28,9)\|$ and so on.
Interestingly, there is no obvious pattern or convergence in the sequence $\left( f(q_i,p_i,r_i) \right)_{i\geq 3}$ of constants; see Table \ref{table1}. Intuitively this makes sense, as it suggests that the Kronecker point sets do not converge to some kind of limit lattice. It would be interesting to further investigate this sequence and prove its non-convergence, which we suspect to be the generic behavior.

\begin{table}[h!] 
\begin{tabular}{ |c|c|c||c|c|c| }
\hline
 	$i$			&  		$q_i$	& $f(q_i,p_i,r_i)$		& 	$i$	&  		$q_i$	& $f(q_i,p_i,r_i)$		\\
\hline
$3$			&  		$241$	& $1.0055$		& 	$10$	&  		$28553528$	& $0.8961$		\\
$4$			&  		$484$	& $0.6182$		& 	$11$	&  		$57343144$	& $0.6323$		\\
$5$			&  		$972$	& $0.9433$		& 	$12$	&  		$3453390097$	& $0.2099$		\\
$6$			&  		$58537$	& $0.9544$		& 	$13$	&  	$6935333722$	& $0.3151$		\\
$7$			&  		$117558$	& $0.7122$		& 	$14$	&  	$13928010588$	& $0.2224$		\\
$8$			&  		$236088$	& $1.0002$		& 	$15$	&  	$838789966513$	& $0.5902$		\\
$9$			&  		$14217985$	& $0.5703$		& 	$16$	&  	$1684515266748$	& $0.8371$	\\
\hline
\end{tabular} 
\\[6pt]
\caption{The denominators of the $i$-th convergent to $(\alpha,\beta)$ and the approximated TSP constants.} \label{table1}
\end{table}

\section*{acknowledgements}
I would like to thank Stefan Steinerberger for bringing this research problem to my attention and for many interesting discussions. Furthermore, I would like to thank an anonymous referee for very constructive feedback and for simplifying my initial proof of Lemma \ref{lem:help}.



\end{document}